\theoremstyle{plain}
\newtheorem{mtheo}{Theorem}		
\newtheorem{mcoro}[mtheo]{Corollary}	
\newtheorem*{theorem*}{Theorem}
\newtheorem{lemma}{Lemma}[section]			
\newtheorem{prop}[lemma]{Proposition}			
\theoremstyle{definition}
\newtheorem{rem}[lemma]{Remark}
\DeclareMathOperator{\Aut}{Aut}
\DeclareMathOperator{\PGL}{PGL}
\DeclareMathOperator{\PSL}{PSL}
\DeclareMathOperator{\rank}{rank}
\newcommand{\V}{\ensuremath{\mathrm{V}}}
\newcommand{\gapcom}[1]{\texttt{#1}\xspace}
\newlength{\heightofhw}
\newcommand{\eigbox}[2]{\ensuremath{#1 \times \boxed{\rule{0cm}{\heightofhw} #2}}}
\title{Rational conjugacy of torsion units\\ in integral group rings\\ of non-solvable groups\footnotetext{ \emph{$2010$ Mathematics Subject Classification.} 16U60, 16S34, 20C05, 20C10\\ \noindent \emph{Key words and phrases.} Integral group ring, torsion unit, Zassenhaus Conjecture, Prime graph question.}}
\author{Andreas Bächle, Leo Margolis}
\date{}
\begin{document}
\maketitle

\begin{abstract} \textsc{Abstract.} We introduce a new method to study rational conjugacy of torsion units in integral group rings using integral and modular representation theory. Employing this new method, we verify the first Zassenhaus Conjecture for the group $\textup{PSL}(2,19)$. We also prove the Zassenhaus Conjecture for $\textup{PSL}(2,23)$. In a second application we show that there are no normalized units of order $6$ in the integral group rings of $M_{10}$ and $\PGL(2,9)$. This completes the proof of a theorem of W.~Kimmerle and A.~Konovalov that the Prime Graph Question has an affirmative answer for all groups having an order divisible by at most three different primes. \end{abstract}

Througout this paper let $G$ be a finite group, $\mathbb{Z}G$ the integral group ring of $G$ and $\V(\mathbb{Z}G)$ the group of augmentation one units in $\mathbb{Z}G$, aka.\ normalized units. The most famous open conjecture regarding torsion units in $\mathbb{Z}G$ is:\\

\noindent \textbf{The Zassenhaus Conjecture (ZC):} Let $u \in \V(\mathbb{Z}G)$ be a torsion unit. Then there exist a unit $x \in \mathbb{Q}G$ and $g \in G$ such that $x^{-1}ux = g.$\\

If for a unit $u$ such $x$ and $g$ exist we say that $u$ is rationally conjugate to $g$. There are positive results for the Zassenhaus Conjecture for classes of solvable groups (e.g.\ A.~Weiss proved it for nilpotent groups \cite{WeissCrelle} and M.~Caicedo, L.~Margolis and Á.~del~Río established it for all cyclic-by-abelian groups \cite{CyclicByAbelian}). For non-solvable groups it is only known for specific groups, e.g.\ for $A_5$ \cite{LutharPassiA5}, $S_5$ \cite{LutharTrammaComm}, $A_6$ \cite{HertweckA6}, or $\PSL(2,p)$ for $p\leq 17$ a prime \cite{HertweckBrauer, KiKoStAndrews, Gildea}.

Considering the difficulty of the Zassenhaus Conjecture and motivated by the results in \cite{Ki2006} it was proposed in \cite[Problem 21]{Ari} to study first the following question:\\

\noindent \textbf{ The Prime Graph Question (PQ):} Let $p$ and $q$ be different primes such that $\V(\mathbb{Z}G)$ has a unit of order $pq$. Does this imply that $G$ has an element of that order?\\

This is the same as to ask whether $G$ and $\V(\mathbb{Z}G)$ have the same prime graph. Much more is known here: e.g.\ it has an affirmative answer for all solvable groups \cite{Ki2006} or the series $\PSL(2,p)$, $p$ a prime \cite{HertweckBrauer}. V.~Bovdi and A.~Konovalov with different collaborators obtained positive answers to (PQ) for many of the sporadic simple groups, see e.g.\ \cite{KonovalovM24} for recent results. Lately substantial progress was made, when W.~Kimmerle and A.~Konovalov obtained the first reduction result for the Prime Graph Question \cite[Proposition 4.1]{KiKoStAndrews} (cf.\ also \cite[Theorem 2.1]{KiKo}):
\begin{theorem*}[Kimmerle, Konovalov] If (PQ) has affirmative answer for all almost-simple images of $G$, then it also has an affirmative answer for $G$ itself. \end{theorem*}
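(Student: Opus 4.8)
The plan is a minimal-counterexample argument, by induction on $|G|$. If $|G|$ has at most one prime divisor there is nothing to prove, since the order of any torsion unit of $\V(\mathbb{Z}G)$ is divisible only by primes dividing $|G|$. So assume the theorem for all groups of smaller order, assume (PQ) holds for every almost-simple image of $G$, and suppose towards a contradiction that $u\in\V(\mathbb{Z}G)$ has order $pq$ for distinct primes $p,q$ while $G$ has no element of order $pq$. The inductive engine is the observation that for every $1\neq N\trianglelefteq G$ each almost-simple image of $G/N$ is also an almost-simple image of $G$, so (PQ) holds for $G/N$ by induction; writing $\bar u\in\V(\mathbb{Z}(G/N))$ for the image of $u$, whose order divides $pq$, if that order equals $pq$ then $G/N$ has an element of order $pq$. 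I would also keep at hand three classical facts about a torsion unit $v$: (a) if $\varepsilon_g(v)\neq0$ then $\operatorname{ord}(g)\mid\operatorname{ord}(v)$ (Cohn--Livingstone/Hertweck); (b) $\varepsilon_1(v)=0$ for $v\neq1$ (Berman--Higman); and, extracted from (a) and (b), (c) if $N\trianglelefteq G$ and $\operatorname{ord}(v)$ is coprime to $|N|$, then $v$ and $\bar v$ have the same order.

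The core is to reduce to $G$ almost-simple, i.e.\ to $F(G)=1$. For a prime $r\notin\{p,q\}$ pick a minimal normal $A\le O_r(G)$: if the image of $u$ over $G/A$ has order $pq$, lift the resulting element of order $pq$ through $1\to A\to\langle g\rangle A\to C_{pq}\to1$ by Schur--Zassenhaus (as $\gcd(|A|,pq)=1$), obtaining an element of order $pq$ in $G$; otherwise a suitable prime-order power of $u$ is a nontrivial torsion unit of order coprime to $|A|$ with trivial image over $G/A$, contradicting (c). Hence $O_r(G)=1$ for all $r\notin\{p,q\}$. Also $O_p(G)$ and $O_q(G)$ cannot both be nontrivial, since $O_p(G)\times O_q(G)\le G$ would then contain an element of order $pq$. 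So, after possibly swapping $p$ and $q$, either $F(G)=1$, or $F(G)=O_p(G)\neq1$ with $O_q(G)=1$; in the latter ``stubborn'' case one must still force $O_p(G)=1$. Granting this, $F(G)=1$, so $\operatorname{soc}(G)=S_1\times\dots\times S_k$ is a direct product of non-abelian simple groups permuted by $G$, with $C_G(\operatorname{soc}(G))=1$, whence $G\hookrightarrow\Aut(\operatorname{soc}(G))$. If $k=1$ then $G$ is almost-simple and an almost-simple image of itself, so (PQ) holds for $G$ by hypothesis --- a contradiction. If $k\geq2$, the product $S_1\times S_2\le\operatorname{soc}(G)\le G$, together with the rich element-order structure of $\operatorname{soc}(G)$, forces $G$ to contain an element of order $pq$ in all but a few residual configurations --- in which $p$ or $q$ must divide $|\operatorname{Out}(S)|\,k!$ and a direct look at the action of a hypothetical element of order $pq$ on $\{S_1,\ldots,S_k\}$ (exploiting a fixed factor, or a generator acting as an inner automorphism of some $S_i$) settles the matter --- again a contradiction.

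The step I expect to be the main obstacle is precisely this reduction of the solvable radical, and within it the ``stubborn'' case $O_p(G)\neq1$: reduction modulo the $p$-group $O_p(G)$ may legitimately drop the order of $u$ from $pq$ to $q$, so Schur--Zassenhaus lifting is unavailable and fact (c) does not apply (coprimality fails), while a $p$-element in the kernel of $\V(\mathbb{Z}G)\to\V(\mathbb{Z}(G/O_p(G)))$ is a priori harmless. Dispatching it requires the finer integral and modular representation theory of torsion units --- results of Weiss on $\mathbb{Z}_pG$-lattices and of Cohn--Livingstone and Hertweck on partial augmentations and on orders of torsion units modulo normal subgroups --- to show that such $p$-elements are controlled by $O_p(G)$; this, together with the elementary but fiddly $k\geq2$ analysis, is where the real work lies. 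Once the problem has been brought down to an almost-simple group, the hypothesis finishes it immediately.
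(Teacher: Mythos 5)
The paper itself does not prove this theorem---it cites \cite[Proposition 4.1]{KiKoStAndrews} and \cite[Theorem 2.1]{KiKo}---so there is no internal argument to compare against; I will assess your sketch on its own merits. Your global framing (minimal counterexample, reduce the Fitting subgroup, then socle analysis, then invoke the hypothesis when $G$ embeds almost-simply) is a reasonable skeleton, and the step eliminating $O_r(G)$ for $r\notin\{p,q\}$ is correct: fact (c) as you derive it from Cohn--Livingstone and Berman--Higman does give the contradiction when the image order drops, and Schur--Zassenhaus lifts an element of order $pq$ back up. One small correction to your diagnosis of the ``stubborn case'': the lift through $O_p(G)$ is \emph{not} the obstruction. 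If $\bar g\in G/N$ has order $pq$ and $N$ is a $p$-group, a preimage $g$ has order $p^{b}\cdot pq$ and $g^{p^{b}}$ has order $pq$; no coprimality is needed. The genuine obstruction is exactly the one you name second, namely that the image $\bar u\in\V(\mathbb{Z}(G/O_p(G)))$ may have order only $q$, so the inductive hypothesis applied to $G/O_p(G)$ yields no information about $u$. Note also that for $G$ solvable the hypothesis of the theorem is vacuous and $F(G)\neq 1$ always, so your stubborn case \emph{is} Kimmerle's solvable theorem \cite{Ki2006}; your proof cannot avoid invoking it (or re-proving it via Weiss/Hertweck), and the sketch leaves this entirely open.

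The second gap, the $k\geq 2$ socle case, is also unresolved as written. ``Rich element-order structure of $\operatorname{soc}(G)$'' is not an argument: if $p$ and $q$ both divide $|S_1|$ but $S_1$ has no element of order $pq$, the subgroup $S_1\times S_2$ of the socle does not obviously supply one, and the residual configurations you allude to (permutation action on the factors, outer automorphisms) need an actual case analysis, not a gesture toward one. Moreover, even granting $F(G)=1$, you need to ensure that the almost-simple quotients $G/C_G(S_i)$ detect the nonexistent order-$pq$ element, and again the reduction of $u$ modulo the solvable kernel $C_G(S_i)$ may drop its order, which circles back to the first gap. So as it stands the proposal identifies the right reduction targets and the right auxiliary facts, but the two steps that carry the actual content---control of torsion units modulo a normal $\{p,q\}$-subgroup, and the multi-factor socle analysis---are both left as acknowledged placeholders, and they are not filled by anything in the proposal.
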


Recall that a group $A$ is almost simple if $S \leq A \leq \Aut(S)$ for a simple group $S$. Using the above theorem, they proved that the Prime Graph Question has a positive answer for all finite  groups whose order is divisible by at most three different primes, if it has a positive answer for $M_{10}$ and $\PGL(2,9)$ \cite[Theorem 3.1]{KiKo}. Their result also places special emphasis on investigating the Prime Graph Question for almost simple groups.

All proofs of (ZC) for non-solvable groups rely on the so-called Luthar-Passi-Hertweck-method \cite{LutharPassiA5,HertweckBrauer}, referred to as the HeLP-method. But in many cases this method does not suffice to prove (ZC), e.g. it fails for $A_6$ \cite{HertweckA6}, $\PSL(2,19)$ (see below) or $M_{11}$ \cite{KonovalovM11}. Sometimes special arguments were considered in such situations as in \cite{LutharTrammaComm}, \cite[Example 2.6]{HertweckAlgColloq}, or \cite{HertweckA6}. But these arguments were designed for very special situations and are hard to generalize or seem not to give new information in other situations.

In this paper we introduce a new method to study rational conjugacy of torsion units inspired by M.~Hertweck's arguments for proving (ZC) for the alternating group of degree 6 \cite{HertweckA6}. This method is especially interesting for units of mixed order (i.e.\ not of prime power order) and in combination with the HeLP-method. We then give two applications of this method to prove: 

\begin{mtheo} The Zassenhaus Conjecture holds for the groups $\PSL(2,19)$ and $\PSL(2,23).$\end{mtheo}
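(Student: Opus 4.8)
The plan is to induct on the order $n$ of a torsion unit $u\in\V(\mathbb{Z}G)$ and, for each $n$, to determine the tuple of partial augmentations of $u$ and then invoke the criterion of Marciniak, Ritter, Sehgal and Weiss: a torsion unit is rationally conjugate to an element of $G$ as soon as, for every divisor $d$ of its order, all but one of the partial augmentations of $u^d$ vanish. Treating orders in increasing order, when we arrive at $u$ every proper power $u^d$ ($d\mid n$, $d<n$) is already known to be rationally conjugate to a group element, so it only remains to show that $u$ itself has a single non-vanishing partial augmentation.

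First I would reduce the list of orders that can occur. Since a prime dividing the order of a torsion unit divides $|G|$, the order $n$ is composed of the primes $2,3,5,19$ for $\PSL(2,19)$ and $2,3,11,23$ for $\PSL(2,23)$; combining the known restrictions on the orders of torsion units (e.g.\ there is no unit of order $4$ when the Sylow $2$-subgroup is elementary abelian) with the affirmative answer to the Prime Graph Question for $\PSL(2,p)$ from \cite{HertweckBrauer}, every $n$ that is not the order of an element of $G$ can be excluded: a suitable power of $u$ would otherwise be a torsion unit whose order is a product of two primes not arising as an element order. This leaves the orders $2,3,5,9,10,19$ for $\PSL(2,19)$ and $2,3,4,6,11,12,23$ for $\PSL(2,23)$. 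Units of order $2$ and of order $3$ are immediate, each group having a single class of elements of that order; units of order equal to the defining characteristic are handled by \cite{HertweckBrauer}, the Sylow subgroup there having prime order; and for the remaining ``small'' orders --- $5$ and (if needed) $9$ for $\PSL(2,19)$, and $4,6,11$ for $\PSL(2,23)$ --- I would run the HeLP method: requiring, for every ordinary irreducible character and every Brauer character (all relevant blocks have cyclic defect, so the Brauer trees are available), that the multiplicities of the eigenvalues of $u$ be non-negative integers, which turns into a finite system of linear constraints on the partial augmentations, and I expect these to force the group-like solution.

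The genuinely hard part --- and the place where the new method of the paper is indispensable --- is the mixed order(s) for which HeLP provably does not suffice, above all order $10$ in $\PSL(2,19)$ (and, plausibly, order $12$ in $\PSL(2,23)$). The obstruction is structural: for $u$ of order $10$, writing $u=u_2u_{2'}$ for the commuting $2$- and $2'$-powers of $u$, every $2$-modular Brauer character value of $u$ equals that of $u_{2'}=u^{6}$, whose partial augmentations are already known, and symmetrically in characteristic $5$; so characters --- ordinary or modular --- simply cannot see the distribution of the partial augmentations of $u$ among the classes of elements of order $10$. The new method must therefore exploit the integral and modular \emph{module} structure. Concretely, I would fix an ordinary representation $\rho$ of $G$, choose a $G$-invariant lattice $L$ over the ring of $2$-adic (resp.\ $5$-adic) integers, and analyse $L$ as a module over $\mathbb{Z}_{2}\langle u\rangle\cong\mathbb{Z}_{2}C_{2}\otimes\mathbb{Z}_{2}C_{5}$: the semisimple factor $u_{2'}$ splits the reduction $\bar L$ into eigenlattices whose ranks are governed by the known value $\chi_{\rho}(u^{6})$, while the $2$-element $u_{2}$ acts unipotently on each eigenlattice with a Jordan structure bounded by the classification of $\mathbb{Z}_{2}C_{2}$-lattices and by purity of fixed-point sublattices (so that, for instance, $\dim V^{u_{2}}\le\dim\bar L^{\bar u_{2}}$). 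On the other side, $\chi_{\rho}(u)$ --- a linear function of the still-unknown partial augmentations of $u$ --- pins down the eigenvalue multiplicities of $\rho(u)$ over the field of fractions, and these must reduce compatibly with the modular picture above. For a well-chosen handful of small representations and auxiliary primes, these compatibility constraints should rule out every ``bad'' partial-augmentation tuple and leave only the one realised by a genuine element of order $10$; plugging this into the Marciniak--Ritter--Sehgal--Weiss criterion finishes $\PSL(2,19)$, and the analogous analysis settles $\PSL(2,23)$. The step I expect to be most delicate is precisely this: bookkeeping the interaction between the unipotent Jordan blocks of $u_{2}$ and the eigenspace decomposition of $u_{2'}$ so that a hypothetical bad tuple is forced to produce a $\mathbb{Z}_{2}G$-lattice with incompatible generic and modular invariants, and selecting representations small enough to compute with yet sharp enough to be decisive.
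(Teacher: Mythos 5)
Your overall skeleton matches the paper: most orders are disposed of by combining Hertweck's results from \cite{HertweckBrauer} with the HeLP method, leaving a mixed order in $\PSL(2,19)$ where the partial augmentations are determined only up to one ``bad'' tuple, and the new lattice-theoretic method is needed to kill that tuple. Two points, however, are not merely vague but go in the wrong direction.

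First, you hedge that order $12$ in $\PSL(2,23)$ may also need the new method. It does not: the HeLP method alone (Lemma~\ref{HeLPPSL}) forces the trivial partial augmentation tuple for units of order $12$ (and $4$), so $\PSL(2,23)$ is finished entirely by known techniques. The new method is used \emph{only} for units of order $10$ in $\PSL(2,19)$.

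Second, and more seriously, for the order-$10$ case your concrete plan is to take a single ordinary representation $\rho$, pass to a $\mathbb{Z}_2$-lattice, split by the eigenvalues of the $5$-part $u^6$, and then control the unipotent $2$-part $u^5$ via $\mathbb{Z}_2 C_2$-lattice theory. This will not produce a contradiction. On the $2$-adic side the modular picture is too coarse: over a field of characteristic $2$ the only simple $kC_2$-module is the trivial one, and both the trivial and the sign $\mathbb{Z}_2C_2$-lattice reduce to the trivial $kC_2$-module, so the reduction forgets exactly the information you would need. Moreover, a single representation, with its eigenvalues fully determined, always admits a consistent $RC_n$-lattice realizing those eigenvalues; the contradiction in the paper comes from \emph{comparing two} ordinary representations whose $p$-modular reductions are linked. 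Concretely, the paper works $5$-adically with $\chi_{\gapcom{18}}$ (degree $18$) and $\chi_{\gapcom{19}}$ (degree $19}$): the decomposition matrix gives $\bar{\chi}_{\gapcom{19}} = \varphi_{\gapcom{1}} + \varphi_{\gapcom{18}}$, so $\bar L_{\gapcom{19}}/\bar L_{\gapcom{18}}$ is trivial and hence $\bar L_{\gapcom{18}}^- \cong \bar L_{\gapcom{19}}^-$ as $k\langle\bar u\rangle$-modules. The key asymmetry you are missing is arithmetic: $\chi_{\gapcom{19}}$ is rational-valued, so $D_{\gapcom{19}}$ lives over $\mathbb{Z}_5$ and Proposition~\ref{prop_lattices_cyclic_group_of_prime_order} (the Heller--Reiner classification of $\mathbb{Z}_pC_p$-lattices) applies, forcing $\bar L_{\gapcom{19}}^-$ to have two indecomposable summands of dimension at least~$4$. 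But $\chi_{\gapcom{18}}$ involves $\sqrt{5}$, so $D_{\gapcom{18}}$ is only realizable over a \emph{ramified} $5$-adic ring $R$, and there Proposition~\ref{num_of_components} (Gudivok's classification of $RC_p$-lattices for ramified $R$) caps the rank of indecomposable summands so that two summands of dimension $\ge 4$ cannot occur. That clash, together with Krull--Schmidt, is the contradiction. Your proposal contains neither the pairing of two representations through the decomposition matrix nor the ramified-versus-unramified dichotomy that drives the argument, and the $2$-adic substitute you outline would not supply an equivalent mechanism.
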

((ZC) for $\PSL(2,23)$ is proved using known methods.)

\begin{mtheo}\label{theo2} There are no units of order 6 in $\V(\mathbb{Z}M_{10})$ and in $\V(\mathbb{Z}\PGL(2,9)).$ Here $M_{10}$ denotes the Mathieu group of degree 10.\end{mtheo}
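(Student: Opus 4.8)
The plan is to suppose, for contradiction, that $u\in\V(\mathbb{Z}G)$ has order $6$, where $G$ is $M_{10}$ or $\PGL(2,9)$; both are extensions $A_6.2$ (of orders $360$ and $720$), and crucially neither has an element of order $6$. First I would collect the standard constraints: by the Berman--Higman theorem $\varepsilon_1(u)=\varepsilon_1(u^2)=\varepsilon_1(u^3)=0$, and by Hertweck's theorem on vanishing partial augmentations each of $u,u^2,u^3$ is supported only on classes of elements whose order divides its own, so -- there being no class of order $6$ -- $u^2$ is supported on the order-$3$ classes, $u^3$ on the involution classes, and $u$ on order-$2$ and order-$3$ classes. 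Next I would exploit the projection $\mathbb{Z}G\to\mathbb{Z}[G/A_6]\cong\mathbb{Z}C_2$, whose normalized torsion units are just $\{1,\sigma\}$: in $M_{10}$ the outer coset $G\setminus A_6$ consists of elements of order $4$ and $8$ only, so those classes carry zero partial augmentation of $u$, forcing $u\mapsto 1$ and hence $u$ (and $u^2,u^3$) to be supported on the classes of $A_6$; for $\PGL(2,9)$ one similarly reduces to two cases, the non-trivial one forcing $u^3$ to be supported on the unique class of outer involutions with $\varepsilon(u)=1$ there. In either group these reductions already determine the partial augmentations of $u^2$ and $u^3$ completely (there being a unique relevant class for each), so all that remains is $u$ itself.

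Then I would run the HeLP method on $u$, using the ordinary character table together with the $2$-, $3$- and $5$-modular Brauer tables of $G$: the HeLP constraints (non-negativity and integrality of the relevant eigenvalue multiplicities $\mu_\ell(u,\chi)$, for all ordinary and modular $\chi$) are linear in the one or two remaining free partial augmentations of $u$. As already happens for $A_6$, I expect this to fall short of a contradiction: it will cut the admissible partial-augmentation tuples of $u$ down to a short, explicit list of integer points, but not to the empty set.

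For every surviving candidate the partial augmentations of $u$ and of all its powers are now numerical, hence so are all values $\chi(u^j)$ and $\varphi(u^j)$ at ordinary and $p$-modular Brauer characters. The new method then finishes each case as follows. Fix a prime $p\mid 6$ and a $\mathbb{Z}_pG$-lattice $L$ realising a conveniently small $\mathbb{F}_pG$-module -- best a projective one, so that $\mathbb{Q}_p\otimes L$ is a genuine $\mathbb{Q}_pG$-character read off the $p$-decomposition matrix, while $\mathbb{F}_p\otimes L$ is a known projective $\mathbb{F}_pG$-module. Restrict $L$ to $C:=\langle u\rangle\cong C_6$. On the one hand, the $\mathbb{Q}_pC$-character of $L|_C$, hence its decomposition into the finitely many, explicitly classifiable indecomposable $\mathbb{Z}_pC$-lattices, is pinned down by the now-known eigenvalue multiplicities of the $u^j$ on $L$. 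On the other hand, $\mathbb{F}_p\otimes L|_C$, viewed as an $\mathbb{F}_pC$-module with its full Loewy structure -- not merely its composition factors -- is determined by the $\mathbb{F}_pG$-module structure of the fixed module $\mathbb{F}_p\otimes L$, restricted along the image of $u$, whose Brauer character we know. The reductions modulo $p$ of the $\mathbb{Z}_pC$-lattices compatible with the first datum are then matched against the $\mathbb{F}_pC$-module produced by the second; the point is that for a suitable choice of $p$ and $L$ no match exists for any candidate on the HeLP list, which is the desired contradiction.

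The hard part -- indeed the reason a new method is needed at all -- is precisely this final comparison. It requires (i) working out the arithmetic of $\mathbb{Z}_pC_6$-lattices (their number, $\mathbb{Q}_p$-characters and mod-$p$ reductions), which is routine, and (ii) computing the fine $\mathbb{F}_pG$-module structure of the chosen lattice(s) for $M_{10}$ and $\PGL(2,9)$ -- their Loewy layers, and how these restrict to a cyclic subgroup generated by a unit whose partial augmentations we control -- and checking that the two descriptions are genuinely incompatible. A priori the mod-$p$ refinement might not exclude exactly the configurations HeLP leaves open; identifying a prime $p$ and a module for which it does (the analogue of the choice made by Hertweck for $A_6$) is the crux, and step (ii) carries the computational weight of the argument.
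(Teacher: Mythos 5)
Your overall framework is right in spirit---restrict suitable lattices to $C:=\langle u\rangle$, compare the possible $\mathbb{Z}_pC$-lattice structure read off the eigenvalues with modular information---but the plan as written has a genuine gap at exactly the point you flag as ``the crux.'' You assert that the full Loewy structure of $\mathbb{F}_p\otimes L|_C$ is ``determined by the $\mathbb{F}_pG$-module structure of the fixed module \ldots restricted along the image of $u$, whose Brauer character we know.'' That is false: the Brauer character of $u$ only records the eigenvalues of the $p'$-part $\bar{u}^3$, hence only the $kC_6$-composition factors of the restriction; it says nothing about how $\bar u$ glues them, because $\bar u$ is an unknown unit, not a group element of known matrix. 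So knowing the $kG$-module and the Brauer character does not hand you a $kC_6$-Loewy series to compare against. The paper circumvents this precisely by never computing such a Loewy structure directly: it deduces the $k\langle\bar u\rangle$-structure of the Brauer constituents $T_{6a}$, $T_{6b}$, $T_8$ indirectly, by feeding the eigenvalue data of two ordinary lattices $L_{10}$ and $L_{20}$ through Propositions \ref{prop_lattices_cyclic_group}--\ref{num_of_components} and through the socle/head filtrations of $\bar L_{10}$, $\bar L_{20}$ as $kG$-modules (obtained computationally); the shared constituent $T_8$ is what links the two.

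A second, structurally more important, point you miss is that the paper does not argue inside $M_{10}$ or $\PGL(2,9)$ at all, and the contradiction is not a direct lattice-versus-module mismatch in one representation. It works in the overgroup $\Aut(A_6)$, where the ordinary characters $\chi_{10}$ and $\chi_{20}$ are rational-valued (so Fong's theorem gives realizations over an unramified $R$), and first extracts the non-symmetric conclusion $T_{6a}^{-}\not\cong T_{6b}^{-}$ as $k\langle\bar u\rangle$-modules, namely $\{2(k)^-\oplus I(kC_3)^-,\,2I(kC_3)^-\}$. Only then is the actual contradiction produced by restriction: down to $M_{10}$ the two $6$-dimensional simples become isomorphic as $kM_{10}$-modules, hence as $k\langle\bar u\rangle$-modules; down to $\PGL(2,9)$ each splits by Clifford theory into two $3$-dimensional pieces that are exchanged by the Frobenius automorphism of $k$, which preserves eigenspace dimensions, forcing them to be isomorphic as $k\langle\bar u\rangle$-modules. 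Both conclusions contradict the earlier non-isomorphism. Your plan of picking a projective $\mathbb{F}_pG$-module and a single lattice inside $M_{10}$ or $\PGL(2,9)$ would not expose this asymmetry, and there is no guarantee (and no argument in your sketch) that any such single-module comparison closes the case.
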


Theorem \ref{theo2} together with \cite[Theorem 2.1, Theorem 3.1]{KiKo} (or \cite[\S 4]{KiKoStAndrews}) directly yields:

\begin{mcoro} Let $G$ be a finite group. Suppose that the order of all almost simple images of $G$ is divisible by at most three different primes. Then the prime graph of the normalized units of $\mathbb{Z}G$ coincides with that of $G$. In particular, the Prime Graph Question has a positive answer for all groups with an order divisible by at most three different primes.\end{mcoro}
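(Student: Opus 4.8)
The plan is to obtain the corollary by feeding Theorem~\ref{theo2} into the two reduction results of Kimmerle and Konovalov recalled in the introduction. Recall first that, by a classical fact, any prime dividing the order of a torsion unit of $\V(\mathbb{Z}G)$ already divides $|G|$, while conversely $G$ embeds into $\V(\mathbb{Z}G)$; hence $G$ and $\V(\mathbb{Z}G)$ have the same vertex set in their prime graphs, and since every edge of the prime graph of $G$ is trivially an edge of that of $\V(\mathbb{Z}G)$, the two prime graphs coincide if and only if (PQ) holds for $G$. It therefore suffices to prove (PQ) for every finite group $G$ all of whose almost simple images have order divisible by at most three different primes.

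By the Kimmerle--Konovalov reduction theorem quoted above, (PQ) for such a $G$ follows once (PQ) is known for every almost simple image of $G$. By hypothesis each of these images is an almost simple group of order divisible by at most three primes, so it is enough to verify (PQ) for all almost simple groups with this property. Here I would invoke \cite[Theorem~3.1]{KiKo} (equivalently \cite[\S4]{KiKoStAndrews}), according to which (PQ) holds for \emph{every} finite group whose order is divisible by at most three different primes --- in particular for every such almost simple group --- provided it holds for the two groups $M_{10}$ and $\PGL(2,9)$. Both of these have order $720 = 2^{4}\cdot 3^{2}\cdot 5$ and neither has an element of order $6$; as is shown in \emph{loc.\ cit.}, the HeLP-method disposes of all the remaining relevant mixed orders, so that for both groups the sole outstanding obstruction to (PQ) is the possible existence of a normalized unit of order $6$.

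This obstruction is precisely what Theorem~\ref{theo2} removes: it asserts that $\V(\mathbb{Z}M_{10})$ and $\V(\mathbb{Z}\PGL(2,9))$ contain no units of order $6$. Consequently (PQ) holds for $M_{10}$ and $\PGL(2,9)$, hence for all almost simple groups of order divisible by at most three primes, hence for all almost simple images of $G$, and therefore --- by the reduction theorem --- for $G$ itself; equivalently, the prime graphs of $\V(\mathbb{Z}G)$ and of $G$ coincide, which is the assertion of the corollary. The final ``in particular'' is immediate, since if $|G|$ is divisible by at most three primes then so is the order of every quotient of $G$, so the hypothesis on the almost simple images is automatically satisfied.

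Everything above, apart from Theorem~\ref{theo2}, is bookkeeping with known reduction theorems; the entire mathematical weight --- and the reason the new method of this paper is needed, the HeLP-method being insufficient to exclude units of order $6$ in these two integral group rings --- lies in Theorem~\ref{theo2}. So the ``main obstacle'' for the corollary is nothing other than establishing Theorem~\ref{theo2} itself.
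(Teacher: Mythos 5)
Your argument is correct and takes exactly the same route as the paper: the paper derives the corollary directly from Theorem~\ref{theo2} together with \cite[Theorem~2.1, Theorem~3.1]{KiKo}, and your proposal simply spells out that bookkeeping (the reduction to almost simple images, the reduction of (PQ) for groups with $\leq 3$ prime divisors to $M_{10}$ and $\PGL(2,9)$, and the removal of the order-$6$ obstruction by Theorem~\ref{theo2}). No gap.
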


\section{From eigenvalues under ordinary representations to the modular module structure}

Let $G$ be a finite group. The main tool to study rational conjugacy of torsion units are partial augmentations: Let $u = \sum\limits_{g \in G} a_g g \in \mathbb{Z}G$ and $x^G$ be the conjugacy class of the element $x \in G$ in $G.$ Then $\varepsilon_x(u) = \sum\limits_{g \in x^G} a_g$ is called the partial augmentation of $u$ at $x.$ This relates to (ZC) via:

\begin{lemma}[Marciniak, Ritter, Sehgal, Weiss] Let $u \in \V(\mathbb{Z}G)$ be a torsion unit of order $n$. Then $u$ is rationally conjugate to a group element if and only if $\varepsilon_x(u^d) \geq 0$ for all $x \in G$ and all powers $u^d$ of $u$ with $d \mid n$. \end{lemma}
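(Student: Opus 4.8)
\emph{Proof proposal.} My plan is to settle the ``only if'' direction by hand, translate the ``if'' direction into a statement about character values, and then reduce the remaining numerical-looking assertion to A.~Weiss's rigidity theorem for $p$-adic lattices, which will carry the weight. For ``only if'': a routine index computation gives $\varepsilon_y(ab)=\varepsilon_y(ba)$ for all $a,b\in\mathbb{Z}G$, since the sum of coefficients over a full conjugacy class is invariant under cyclic permutation; hence $\varepsilon_y$ vanishes on additive commutators and $\varepsilon_y(vwv^{-1})=\varepsilon_y(w)$ for every $w\in\mathbb{Q}G$ and every unit $v\in(\mathbb{Q}G)^{\times}$. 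If $x^{-1}ux=g\in G$ then $x^{-1}u^{d}x=g^{d}$ for all $d\mid n$, so $\varepsilon_y(u^{d})=\varepsilon_y(g^{d})\in\{0,1\}$ is non-negative for every $y\in G$ and every $d\mid n$.

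For ``if'', I would first note that column orthogonality, $\varepsilon_y(w)=|C_G(y)|^{-1}\sum_{\chi\in\mathrm{Irr}(G)}\chi(w)\overline{\chi(y)}$, makes the families $\bigl(\varepsilon_y(w)\bigr)_y$ and $\bigl(\chi(w)\bigr)_\chi$ equivalent data for each fixed $w$; applied to $w=u^{d}$, the hypothesis is the same as knowing all the numbers $\chi(u^{d})$, $\chi\in\mathrm{Irr}(G)$, $d\mid n$. Since $\mathbb{Q}G$ is semisimple, a torsion element of $\mathbb{Q}G$ is determined up to conjugacy in $(\mathbb{Q}G)^{\times}$ by the multiplicities of the eigenvalues of its powers on each Wedderburn component (computed over a splitting field, then descended to $\mathbb{Q}$ by a Skolem--Noether/Noether--Deuring argument, using that the relevant centralizers are products of $\mathrm{GL}$'s). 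Hence $u$ is rationally conjugate to $g\in G$ if and only if $\chi(u^{d})=\chi(g^{d})$ for all $\chi$ and all $d\mid n$, i.e.\ if and only if $\varepsilon_y(u^{d})=\bigl[\,y^{G}=(g^{d})^{G}\,\bigr]$ for all $y\in G$ and $d\mid n$. So the remaining task is purely numerical in appearance: show that non-negativity of all the $\varepsilon_y(u^{d})$ forces these numbers to be exactly the augmentation pattern of the successive powers of a single element of $G$.

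To produce that element I would pass to lattices. The abelian group $\mathbb{Z}G$, with $\langle u\rangle$ acting by left multiplication and $G$ by right multiplication, is a $\mathbb{Z}[\langle u\rangle\times G]$-lattice $N_u$ whose rationalization has character at $(u^{d},h)$ equal, up to the factor $|C_G(h)|$, to $\varepsilon_{h^{-1}}(u^{d})$; consequently $N_u\otimes\mathbb{Q}\cong N_g\otimes\mathbb{Q}$ exactly when $u$ is rationally conjugate to $g$, and $N_g\otimes\mathbb{Q}$ is the transitive permutation module induced from the diagonally embedded copy $\{(u^{i},g^{i})\}$ of $\langle u\rangle$. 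One must therefore recognise $N_u$ as a permutation lattice of this shape. I would reduce to prime-power order by splitting $u$ into its commuting $p$-parts $u_p$ (each of which is a power of $u$, hence inherits the non-negativity hypothesis on all of its powers), localise at $p$, and apply Weiss's rigidity theorem: that theorem says precisely that under this non-negativity input the relevant $\mathbb{Z}_p[\langle u_p\rangle\times G]$-lattice must be a permutation lattice, so each $u_p$ is rationally conjugate to a group element; a compatibility (Chinese-remainder) argument across the divisors $d\mid n$ then patches the pieces into one $g\in G$ of order $n$ realising the required augmentation pattern. The genuinely hard point is this last step: getting from the numerical non-negativity condition to an honest module isomorphism — equivalently, to the conjugating unit — is not combinatorics but rests on Weiss's deep $p$-adic rigidity; the secondary difficulty is the bookkeeping that glues the $p$-local conclusions and the various powers $u^{d}$ together. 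The ``only if'' direction and the two reformulations are formal.
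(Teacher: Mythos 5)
Your ``only if'' direction is correct, and the reformulations---rational conjugacy of torsion elements of $\mathbb{Q}G$ is detected by the character values $\chi(u^d)$ of all powers, which in turn are linearly equivalent to the partial augmentations $\varepsilon_y(u^d)$---are the standard, right starting point (one small imprecision: the hypothesis ``$\varepsilon_y(u^d)\geq 0$'' is not literally ``knowing the $\chi(u^d)$''; you need to add that the $\varepsilon_y(u^d)$ are integers summing to the augmentation~$1$, so exactly one equals $1$ and the rest vanish, and only then are the $\chi(u^d)$ pinned down). Note that the paper itself offers no internal proof: it simply cites~\cite[Theorem~2.5]{MarciniakRitterSehgalWeiss}, so there is no ``paper's proof'' to compare against, only whether your argument would actually establish the statement.

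The genuine gap is the last step, where you invoke Weiss's $p$-adic rigidity theorem to recognise the lattice $N_u=\mathbb{Z}G$ (with $\langle u\rangle\times G$ acting) as a permutation lattice. Weiss's theorem applies to $\mathbb{Z}_pP$-lattices for a finite $p$-group $P$, and its hypotheses are structural---freeness of the restriction to a normal subgroup together with the fixed-point lattice being a permutation module over the quotient---not numerical. The group $\langle u_p\rangle\times G$ is not a $p$-group for general $G$, so the theorem as stated simply does not apply, and you have not explained how non-negativity of partial augmentations would yield Weiss's structural hypotheses even for a $p$-group. In fact the logical flow in the literature is the reverse of what you suggest: in the proof of (ZC) for nilpotent groups one uses Weiss's rigidity to produce a permutation lattice and thereby deduce the non-negative augmentation pattern, whereas the Marciniak--Ritter--Sehgal--Weiss criterion under discussion is the much more elementary converse device that converts such an augmentation pattern back into a conjugating unit, for arbitrary~$G$, by purely semisimple/character-theoretic arguments over $\mathbb{Q}$. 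Finally, the ``compatibility (Chinese remainder) argument'' that is supposed to glue the $p$-local conclusions and the various powers $u^d$ into a single $g\in G$ is exactly where the real content lies, and it is left entirely unaddressed. As written, the ``if'' direction is not proved.
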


\noindent\emph{Proof.} \cite[Theorem 2.5]{MarciniakRitterSehgalWeiss}.\\

It is well known that if $u \neq 1$ is a torsion unit in $\V(\mathbb{Z}G)$, then $\varepsilon_1(u)=0$ by the so-called Berman-Higman Theorem \cite[Proposition 1.4]{SehgalBook2}. If $\varepsilon_x(u) \neq 0$, then the order of $x$ divides the order of $u$ \cite[Theorem 2.7]{MarciniakRitterSehgalWeiss}, \cite[Proposition 3.1]{HertweckAlgColloq}. Moreover the exponents of $G$ and of $\V(\mathbb{Z}G)$ coincide \cite{CohnLivingstone}. We will use this in the following without further mention.

Let $K$ be a field, $D$ a $K$-representation of $G$ with corresponding character $\chi$ and $u \in \V(\mathbb{Z}G)$ a torsion unit of order $n$. If $\chi$ and all partial augmentations of $u$ and all its powers are known, and the characteristic of $K$ does not divide $n$, we can compute the eigenvalues of $D(u)$ in a field extension of $K$ which is large enough (a field which is a splitting field for $G$ and all its subgroups will do; there will be plenty of examples for this kind of calculations in §2). The HeLP-method makes use of the fact that the multiplicity of each $n$-th root of unity as an eigenvalue of $D(u)$ is a non-negative integer.\\

\noindent \textbf{Notations:} $p$ will always denote a prime, $\mathbb{Q}_p$ the $p$-adic completion of $\mathbb{Q}$ and $\mathbb{Z}_p$ the ring of integers of $\mathbb{Q}_p$. By $R$ we denote a complete local ring with maximal ideal $P$ containing $p$ and by $K$ the field of fractions of $R.$ Denote by $k$ a finite field of characteritic $p$ containing the residue class field of $R$. The reduction modulo $P$, also with respect to lattices, will be denoted by \ $\bar{}$ .\\ 

The idea of our method is that if $D$ is an $R$-representation of a group $G$ with corresponding $RG$-lattice $L$ and $u$ is a torsion unit in $\mathbb{Z}G$ of order divisible by $p$, we can reduce $D$ modulo $P$ and obtain restrictions on the isomorphism type of $kG$-modules considered as $k\langle \bar{u} \rangle$-modules, where $k$ is big enough to allow realizations of all irreducible $p$-modular representations of $G.$ Note that the Krull-Schmidt-Azumaya Theorem holds for finitely generated $RG$-lattices \cite[Theorem 30.6]{CurtisReiner1}. From these isomorphism types we can then obtain restrictions on the isomorphism types of the $kG$-composition factors of $\bar{L}$ when viewed as $k\langle \bar{u} \rangle$-module. Since a simple $kG$-module may appear in the reduction of several ordinary representations, this may finally yield a contradiction to the existence of $u.$ A rough sketch of the method is given in Figure \ref{idea}. As the direct path from the eigenvalues of an ordinary representation to the isomorphism types of the corresponding reduced module is not always evident, we are sometimes forced to take take the detour along the dashed arrows.

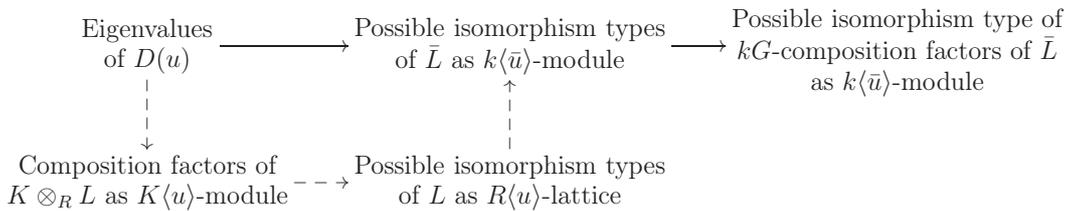
\begin{figure}[h]
\xyoption{frame}
\begin{displaymath}
   \resizebox{.9\textwidth}{!}{ \xymatrix{ 
    *+{\txt{Eigenvalues\\ of $D(u)$}}*\frm<8pt>{-} \ar@{->}[r] \ar@{-->}[d] &
    *+{\txt{Possible isomorphism types\\ of $\bar{L}$ as $k\langle\bar{u}\rangle$-module}} \ar[r] & \txt{Possible isomorphism type of \\ $kG$-composition factors of $\bar{L}$ \\ as $k\langle \bar{u} \rangle$-module} \\
    *+{\txt{Composition factors of\\ $K\otimes_R L$ as $K\langle u \rangle$-module}} \ar@{-->}[r] &
    *+{\txt{Possible isomorphism types\\ of $L$ as $R\langle u\rangle$-lattice}} \ar@{-->}[u]  }
    }
\end{displaymath}
\caption{The idea of the method.}\label{idea}
\end{figure}

The connections between the eigenvalues of ordinary representations and the isomorphism type of the modules in positive characteristic for some cases are contained in the following propositions, which are consequences of known modular and integral representation theory.

The first proposition is standard knowledge in modular representation theory and may be found in e.g.\ \cite[Theorem 5.3, Theorem 5.5]{Huppert2}.

\begin{prop}\label{prop_modules_cyclic_mod_p}  Let $G = \langle g \rangle$ be a cyclic group of order $p^am$, where $p$ does not divide $m$. Let $k$ be a field of characteristic $p$ containing a primitive $m$-th root of unity $\xi$. Then:
\begin{itemize}
\item[a)] Up to isomorphism there are $m$ simple $kG$-modules. All these modules are one-dimensional as $k$-vector spaces, $g^m$ acts trivially on them and $g^{p^a}$ acts as $\xi^i$ for $1 \leq i \leq m$. We denote these modules by $k^{\xi}, k^{\xi^2}, \ ..., \ k^{\xi^m}$.
\item[b)] The projective indecomposable $kG$-modules are of dimension $p^a$. They are all uniserial and all composition factors of a projective indecomposable $kG$-module are isomorphic. There are $m$ non-isomorphic projective indecomposable $kG$-modules.
\item[c)] Each indecomposable $kG$-module is isomorphic to a submodule of a projective indecomposable module. So there are $p^am$ indecomposable modules, which are all uniserial and all composition factors of an indecomposable $kG$-module are isomorphic.  
\end{itemize}\end{prop}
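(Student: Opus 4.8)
The plan is to reduce the whole statement to the module theory of a truncated polynomial ring. First I would use the coprimality of $p$ and $m$ to split $G = P \times C$, where $P = \langle g^m \rangle$ is cyclic of order $p^a$ and $C = \langle g^{p^a} \rangle$ is cyclic of order $m$; correspondingly $kG \cong kP \otimes_k kC$. Since $p \nmid m$, Maschke's theorem together with the presence of $\xi$ gives that $kC$ is split semisimple, so $kC \cong k^m$ with primitive idempotents $e_1, \dots, e_m$ attached to the linear characters $\chi_i \colon C \to k^\times$, $g^{p^a} \mapsto \xi^i$. On the other hand $kP \cong k[x]/(x^{p^a})$ via $x \mapsto g^m - 1$, using that $(g^m-1)^{p^a} = g^{mp^a} - 1 = 0$ in characteristic $p$. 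Combining,
\[
kG \;\cong\; \prod_{i=1}^{m} \bigl(kP \otimes_k k e_i\bigr) \;\cong\; \prod_{i=1}^{m} k[x]/(x^{p^a}),
\]
a direct product of $m$ local uniserial rings, the $i$-th factor carrying the $C$-action through $\chi_i$.

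Next I would record the module theory of a single factor $A := k[x]/(x^{p^a})$: it is a local principal ideal ring with nilpotent maximal ideal $(x)$ of nilpotency index $p^a$, its unique simple module is the one-dimensional $k$, its unique projective indecomposable is $A$ itself (dimension $p^a$, uniserial, all composition factors $\cong k$), and — this is the one substantial input — every finitely generated $A$-module is a direct sum of the cyclic modules $k[x]/(x^j)$ for $1 \le j \le p^a$. The last fact follows from the structure theorem for finitely generated torsion $k[x]$-modules supported at $(x)$, or by a short direct induction; each $k[x]/(x^j)$ is uniserial with all composition factors isomorphic and sits inside $A$ as both a submodule and a quotient.

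Finally I would transport these facts through the product decomposition, using that a $kG$-module is the same as an $m$-tuple of modules over the factors, and that being simple, projective indecomposable, or indecomposable means being concentrated in a single factor and having that property there. This yields immediately: exactly $m$ simple modules $k^{\xi^i}$, one-dimensional, with $g^m$ (which is $1+x$) acting trivially and $g^{p^a}$ acting as $\xi^i$ (part a); exactly $m$ projective indecomposables, the $i$-th being $kP$ with $C$ acting via $\chi_i$, of dimension $p^a$, uniserial, with all composition factors $\cong k^{\xi^i}$ (part b); and exactly $p^a m$ indecomposables, each isomorphic to $k[x]/(x^j)$ in some factor, hence uniserial, a submodule of the corresponding projective indecomposable, with all composition factors isomorphic (part c). The only step demanding real work is the classification of indecomposable $A$-modules; everything else is bookkeeping with the algebra isomorphism above, so I expect that classification (or the decision to simply cite it, as in \cite[Theorem 5.3, Theorem 5.5]{Huppert2}) to be the crux.
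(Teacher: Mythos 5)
Your proof is correct. The paper does not prove Proposition~\ref{prop_modules_cyclic_mod_p}, but simply cites it as standard (from Huppert), and what you have written is precisely the standard textbook derivation that the citation outsources: split $kG \cong \prod_{i=1}^m k[x]/(x^{p^a})$ using $G = \langle g^m\rangle \times \langle g^{p^a}\rangle$, Maschke plus the $m$-th root of unity for the semisimple factor, and $g^m \mapsto 1+x$ for the $p$-part; then transport the classification of finitely generated modules over the local uniserial ring $k[x]/(x^{p^a})$ (the sole nontrivial input) through the central idempotents. Your accounting of simples, projective indecomposables, and indecomposables, and your observation that $k[x]/(x^j)$ embeds in $A$ as $(x^{p^a-j})$, are all accurate, so the argument stands as a complete proof where the paper offers only a reference.
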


Using Proposition \ref{prop_modules_cyclic_mod_p} and the fact that idempotents may be lifted \cite[Theorem 30.4]{CurtisReiner1} we obtain:

\begin{prop}\label{prop_lattices_cyclic_group} Let $G = \langle g \rangle$ be a cyclic group of order $p^am$, where $p$ does not divide $m$. Let $R$ be a complete local ring containing a primitive $m$-th root of unity $\zeta$. Let $D$ be an $R$-representation of $G$ and let $L$ be an $RG$-lattice affording this representation.\\
Let $A_i$ be sets with multiplicities of $p^a$-th roots of unity such that $\zeta A_1 \cup \zeta^2 A_2 \cup ... \cup \zeta^mA_m$ are the complex eigenvalues of $D(g),$ where $A_i = \emptyset$ is possible. Let $V_1, \ ..., \ V_m$ be $KG$-modules such that if $E_i$ is a representation of $G$ affording $V_i$ the eigenvalues of $E_i(g)$ are $\zeta^iA_i.$ Then 
\[L \cong L^{\zeta^1} \oplus ... \oplus L^{\zeta^m} \quad \text{and} \quad \bar{L} \cong \bar{L}^{\zeta^1} \oplus ... \oplus \bar{L}^{\zeta^m}\]
such that $\rank_R(L^{\zeta^i})=\dim_k(\bar{L}^{\zeta^i}) = |A_i|.$ (The superscripts $\zeta^i$ are merely meant as indices.) Moreover $K \otimes_R L^{\zeta^i} \cong V_i$ and the only composition factor of $\bar {L}^{\zeta^i}$ is $k^{\xi^i}$, see the notation in Proposition \ref{prop_modules_cyclic_mod_p}.\end{prop}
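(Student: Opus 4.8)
\emph{Proof proposal.} The plan is to split $L$ according to the action of the $p'$-part $g^{p^a}$ of $g$, which generates the cyclic Hall $p'$-subgroup $Q$ of $G$, of order $m$. Since $p \nmid m$, the group algebra $kQ$ is isomorphic to $k^m$ (as $k$ contains a primitive $m$-th root of unity), so $kG = kP \otimes_k kQ$, with $P$ the Sylow $p$-subgroup of $G$, splits into exactly $m$ blocks; by Proposition~\ref{prop_modules_cyclic_mod_p}(a) the block indexed by $i$ contains the single simple module $k^{\xi^i}$. Here $\xi \in k$ should be taken to be $\bar\zeta^{\,p^a}$ --- a primitive $m$-th root of unity, and the normalization that makes the indices come out as in the statement (see the last paragraph). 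The block idempotents of $kG$ lift, by \cite[Theorem 30.4]{CurtisReiner1}, to a complete set $e_1,\dots,e_m$ of orthogonal central idempotents of $RG$ (central because $RG$ is commutative), and I would put $L^{\zeta^i} := e_i L$. Then $L = \bigoplus_i L^{\zeta^i}$, and reduction modulo $P$ gives $\bar L = \bigoplus_i \bar L^{\zeta^i}$ with $\bar L^{\zeta^i} = \bar e_i \bar L$; the equality $\rank_R(L^{\zeta^i}) = \dim_k(\bar L^{\zeta^i})$ is automatic, and, lying in the $i$-th block, $\bar L^{\zeta^i}$ has all its $kG$-composition factors isomorphic to $k^{\xi^i}$.

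It remains to identify $K \otimes_R L^{\zeta^i}$ and pin down the ranks. The element $g^{p^a}$ is a $p'$-element, hence semisimple, with minimal polynomial over $R$ dividing $x^m - 1$, which is separable modulo $P$; since it acts on $\bar L^{\zeta^i}$ as the scalar $\xi^i$ (being so on every composition factor), it must act on $L^{\zeta^i}$ itself --- and therefore on $K \otimes_R L^{\zeta^i}$ --- as the scalar $\zeta^{\,p^a i}$ lifting $\xi^i$. Consequently the complex eigenvalues of $g$ on $K \otimes_R L^{\zeta^i}$ are precisely the eigenvalues $\omega$ of $D(g)$ with $\omega^{p^a} = \zeta^{\,p^a i}$; since a $p^a m$-th root of unity decomposes uniquely as ($m$-th root)$\cdot$($p^a$-th root), this condition says exactly that the $m$-th-root part of $\omega$ is $\zeta^i$, so these eigenvalues form precisely the multiset $\zeta^i A_i$ of the statement, and in particular $\rank_R(L^{\zeta^i}) = |A_i|$. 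Finally, a finitely generated module over the cyclic group $G$ is determined up to isomorphism by the multiset of complex eigenvalues of $g$ on it: over a characteristic-$0$ field such a module is a direct sum of simples indexed by the $K$-Galois orbits of roots of unity, and a multiset of eigenvalues arising from a genuine $K$-representation is Galois-stable, hence fixes the multiplicities of these simples. (This is also the one place where it matters that $K$ may fail to be a splitting field for $P$.) Thus $K \otimes_R L^{\zeta^i} \cong V_i$, and all assertions follow.

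Most of this is routine modular and integral representation theory once the block/idempotent decomposition is set up; the step that really needs care is the eigenvalue bookkeeping --- keeping the grouping $\zeta^1 A_1 \cup \dots \cup \zeta^m A_m$ of the eigenvalues of $D(g)$, the idempotent decomposition, and the labels $k^{\xi^i}$ mutually consistent. This is exactly why the relation between the two fixed roots of unity has to be the twisted one $\xi = \bar\zeta^{\,p^a}$, absorbing the automorphism $\nu \mapsto \nu^{p^a}$ of the group of $m$-th roots of unity that mediates between ``the scalar by which $g^{p^a}$ acts on $L^{\zeta^i}$'' and ``the $m$-th-root part of the eigenvalues of $g$ on $L^{\zeta^i}$''. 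Beyond this bookkeeping and the small remark about $K$ not being a splitting field, I do not anticipate any obstacle.
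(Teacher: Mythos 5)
Your proof is correct and follows exactly the route the paper indicates for this proposition (lift the block idempotents of $kG$ via \cite[Theorem 30.4]{CurtisReiner1}, use Proposition~\ref{prop_modules_cyclic_mod_p} to identify the blocks, and read off ranks and composition factors). The one point you add — that consistency of the labels forces $\xi=\bar\zeta^{\,p^a}$ rather than $\xi=\bar\zeta$, because $g^{p^a}$ acts on $L^{\zeta^i}$ with eigenvalue $\zeta^{ip^a}$ — is a real normalization the paper leaves tacit, and you were right to flag it.
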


To understand the full connection between the eigenvalues $\zeta^iA_i$ and the structure of $L^{\zeta^i}$, i.e.\ to follow the arrow in the second line of Figure \ref{idea}, one must study the representation theory of $R\langle g^m \rangle.$ The representation type of $R\langle g^m \rangle$ may be finite, tame or wild. Roughly speaking, the representation theory gets more complicated with increasing $a$ and increasing ramification index of $K$ over $\mathbb{Q}_p.$ A listing of all representation types may be found in \cite{Dieterich}. Some results concerning the connection between $A_i$ and $L^{\zeta^i}$ are recorded in the next propositions. The first one is a consequence of \cite[Theorem 2.6]{HellerReiner1}.

\begin{prop}\label{prop_lattices_cyclic_group_of_prime_order} Let the notation be as in Proposition \ref{prop_lattices_cyclic_group}, assume $G \cong C_p$ and that $K$ is unramified over $\mathbb{Q}_p$. Let $\gamma$ be a primitive $p$-th root of unity. Up to isomorphism there are 3 indecomposable $RG$-lattices $M_1, M_2, M_3$. Each $\bar{M}_i$ remains indecomposable. The $R$-rank and the corresponding eigenvalues of $D(g)$ are: $\rank_R(M_1) = 1 $ with eigenvalue $1$, $\rank_R(M_2) = p-1$ with eigenvalues $\gamma, \gamma^2, ..., \gamma^{p-1}$ and $\rank_R(M_3) = p$ with eigenvalues $1, \gamma, \gamma^2, ..., \gamma^{p-1}$. \end{prop}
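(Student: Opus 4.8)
The plan is to reduce everything to the classical Diederichsen--Reiner description of $\mathbb{Z}_pC_p$-lattices, which in the form we need is exactly \cite[Theorem 2.6]{HellerReiner1}: over a complete discrete valuation ring whose field of fractions is unramified over $\mathbb{Q}_p$ there are, up to isomorphism, precisely three indecomposable $RG$-lattices. Here the unramifiedness hypothesis is what makes this work: it forces $R[\gamma]$ to be the valuation ring of a \emph{totally ramified} extension $K(\gamma)/K$ of degree $p-1$ (the compositum of the unramified $K/\mathbb{Q}_p$ with the totally ramified $\mathbb{Q}_p(\gamma)/\mathbb{Q}_p$), so that the integral representation theory of $RG$ is governed by the same fibre-product picture $RG = R\times_k R[\gamma]$ as for $\mathbb{Z}_pC_p$. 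Granting the count of three, it then suffices to exhibit three pairwise non-isomorphic indecomposable lattices with the claimed invariants; by the cited theorem these must be all of them.

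The three candidates are $M_1=R$ with trivial $G$-action, $M_2=R[\gamma]=\mathcal{O}_{K(\gamma)}$ with $g$ acting by multiplication by $\gamma$, and $M_3=RG$, the regular lattice; their $R$-ranks are $1$, $[K(\gamma):K]=p-1$, and $|G|=p$. For indecomposability I would compute endomorphism rings: $\mathrm{End}_{RG}(M_1)=R$; since the $RG$-action on $M_2$ factors through the surjection $RG\twoheadrightarrow R[\gamma]$ onto the commutative ring $R[\gamma]$, the $RG$-endomorphisms of $M_2$ are exactly the multiplications by elements of $\mathcal{O}_{K(\gamma)}$, so $\mathrm{End}_{RG}(M_2)\cong\mathcal{O}_{K(\gamma)}$; and $\mathrm{End}_{RG}(M_3)\cong RG$ has only the trivial idempotents because its reduction $kG=k[\bar g]/((\bar g-1)^p)$ is local and idempotents lift \cite[Theorem 30.4]{CurtisReiner1}. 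All three endomorphism rings are local (or have no nontrivial idempotents), so each $M_i$ is indecomposable, and they are pairwise non-isomorphic because no two share both the same rank and the same $K\langle g\rangle$-character (for $p=2$ the ranks of $M_1$ and $M_2$ agree, but $g$ acts trivially on the first and as $-1$ on the second).

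Next I would read off the eigenvalues from characteristic polynomials: on $M_1$ it is $x-1$; on $M_2$ the minimal polynomial of $\gamma$ over $K$ is the $p$-th cyclotomic polynomial $\Phi_p$, and $\deg\Phi_p=p-1=\rank_R M_2$, so the characteristic polynomial is $\Phi_p$ and the eigenvalues are $\gamma,\gamma^2,\dots,\gamma^{p-1}$; on $M_3=RG$ the regular representation of $g$ has characteristic polynomial $x^p-1$, giving eigenvalues $1,\gamma,\dots,\gamma^{p-1}$. For the reductions: $\bar M_1=k$ is simple; writing $\pi=\gamma-1$, which is a uniformizer of $\mathcal{O}_{K(\gamma)}$ with $\pi^{p-1}=p\cdot(\text{unit})$ since $\prod_{i=1}^{p-1}(\gamma^i-1)=\pm\Phi_p(1)=\pm p$ and each $\gamma^i-1=\pi\cdot(\text{unit})$, one gets $\bar M_2=\mathcal{O}_{K(\gamma)}/(p)=\mathcal{O}_{K(\gamma)}/(\pi^{p-1})$ on which $\bar g=1+\bar\pi$ acts by multiplication, i.e.\ $\bar M_2\cong k[t]/(t^{p-1})$ as a $k\langle\bar g\rangle$-module with $t=\bar g-1$ — a single Jordan block, hence uniserial and indecomposable; and $\bar M_3=kG=k[\bar g]/((\bar g-1)^p)$ is the projective indecomposable, a single Jordan block of size $p$. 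In the language of Proposition~\ref{prop_modules_cyclic_mod_p} (with $m=1$) these are, respectively, the trivial module, the unique indecomposable of dimension $p-1$, and the projective indecomposable module.

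I do not see a serious obstacle in the argument: the hard theorem (that there are exactly three indecomposables) is being cited, and the remaining work is bookkeeping with characteristic polynomials and with uniserial modules over $k\langle\bar g\rangle$. The one point genuinely requiring care is the very first step — checking that \cite[Theorem 2.6]{HellerReiner1} applies verbatim to our $R$ rather than to $\mathbb{Z}_p$ itself — and this rests on the standard fact that $K$ and $\mathbb{Q}_p(\gamma)$ are linearly disjoint over $\mathbb{Q}_p$ because one is unramified and the other totally ramified, so that $R[\gamma]/R$ has residue degree $1$ and ramification index $p-1$ exactly as $\mathbb{Z}_p[\gamma]/\mathbb{Z}_p$, and the conductor-square description of the group ring is unchanged.
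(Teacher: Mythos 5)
Your proof is correct and rests on the same key input as the paper's: the paper simply asserts the proposition ``is a consequence of [Theorem 2.6]{HellerReiner1}'' with no further argument, and you cite the same theorem for the count of three indecomposables. The rest of your write-up (exhibiting $R$, $R[\gamma]$, $RG$; the rank/eigenvalue bookkeeping; the uniformizer computation showing $\bar M_2\cong k[t]/(t^{p-1})$; and the linear-disjointness remark justifying the transfer from $\mathbb{Z}_p$ to $R$) is a correct and welcome filling-in of details the paper leaves implicit.
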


\noindent \textbf{Notation:} We denote the indecomposable lattices in Proposition \ref{prop_lattices_cyclic_group_of_prime_order} with their natural names: the trivial lattice $M_1 = R$, the augmentation ideal $M_2 = I(RC_p)$ and the group ring $M_3 = RC_p$.\\
When considering $R C_{2p}$-lattices or $k C_{2p}$-modules as in Proposition \ref{prop_lattices_cyclic_group} (i.e.\ $a = 1$ and $m = 2$), we abbreviate the superscript $1$ and $-1$ to $+$ and $-$, respectively, i.e. $L^+ = L^{1}$, $L^- = L^{-1}$, $\bar{L}^+ = \bar{L}^{1}$ and $\bar{L}^- = \bar{L}^{-1}$ for the direct summands having trivial and non-trivial composition factors, respectively.

\begin{prop}\label{num_of_components}
Let the notation be as in Proposition \ref{prop_lattices_cyclic_group}, $p$ be odd and $p \equiv \epsilon \mod 4$ with $\epsilon \in \{\pm 1\}.$ Assume $K$ is the $p$-adic completion of an extension of $\mathbb{Q}(\sqrt{\epsilon p})$ which is unramified at $p$. Denote by $R$ the ring of integers of $K$ and let $G \cong C_p$. Note that there are exactly three simple $RG$-lattices up to isomorphism.
The two following facts hold for indecomposable $RG$-lattices:   
\begin{enumerate}
\item[a)] If $L$ is an indecomposable $RG$-lattice, then the non-trivial simple lattices each appear at most once as a composition factor of L, and the trivial one at most twice.
\item[b)] If $L$ is an indecomposable $RG$-lattice having at most two non-isomorphic composition factors, then each composition factor appears at most once.
\end{enumerate}
\end{prop}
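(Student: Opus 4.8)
The plan is to reduce everything to the unramified case, Proposition~\ref{prop_lattices_cyclic_group_of_prime_order}, and then to analyse the extra ``square root of $\pm p$'' carried by the ground ring. First one identifies $KG$: since $p\equiv\epsilon\bmod 4$ we have $\epsilon p\equiv1\bmod4$, so $\mathbb{Q}(\sqrt{\epsilon p})$ is the quadratic subfield of $\mathbb{Q}(\zeta_p)$, ramified over $\mathbb{Q}$ only at $p$; as $K\supseteq\mathbb{Q}(\sqrt{\epsilon p})$ is unramified over $\mathbb{Q}_p(\sqrt{\epsilon p})$, the extension $K/\mathbb{Q}_p$ has ramification index exactly $2$ and $\Phi_p$ splits over $K$ into two irreducible factors of degree $(p-1)/2$. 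Hence $KG\cong K\times A\times A$ with $A=K(\zeta_p)$, $[A:K]=(p-1)/2$, and $A/K$ totally tamely ramified with $1-\zeta_p$ a uniformiser; thus $\mathcal{O}:=R[\zeta_p]$ is the valuation ring of $A$, and the three simple $RG$-lattices are the trivial lattice $R$ and two copies $S_1,S_2$ of $\mathcal{O}$ on which $g$ acts as the two conjugate primitive $p$-th roots of unity (each the unique lattice over its simple $KG$-summand, since $R$ and $\mathcal{O}$ are discrete valuation rings). For a lattice $L$ write $(a,b,c)$ for the multiplicities of $R,S_1,S_2$ in $K\otimes_RL$; these are exactly the multiplicities of the three simple lattices among the composition factors of $L$, so (a) and (b) are assertions bounding $a,b,c$ for indecomposable $L$.

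Let $R_0\subseteq R$ be the ring of integers of the maximal subextension $K_0$ of $K$ unramified over $\mathbb{Q}_p$, so $K=K_0(\sqrt{\epsilon p})$, $R=R_0[\sqrt{\epsilon p}]$ and $RG=R_0G[\,t\,]/(t^2-\epsilon p)$. An $RG$-lattice is therefore an $R_0G$-lattice $L$ together with an endomorphism $\phi\in\operatorname{End}_{R_0G}(L)$ with $\phi^2=\epsilon p\cdot\operatorname{id}$, and $L$ is indecomposable over $RG$ precisely when the centraliser of $\phi$ in $\operatorname{End}_{R_0G}(L)$ is local. By Proposition~\ref{prop_lattices_cyclic_group_of_prime_order} over $R_0$ we may take $L\cong M_1^{s_1}\oplus M_2^{s_2}\oplus M_3^{s_3}$ with $M_1=R_0$, $M_2=I(R_0G)$, $M_3=R_0G$. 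Comparing with $K_0\otimes_R(K\otimes_RL)\cong K_0^{2a}\oplus K_0(\zeta_p)^{\,b+c}$ gives $2a=s_1+s_3$; and $b,c$ are the multiplicities of the two eigenvalues $\pm\tau$ of $\phi$ (with $\tau\in\mathcal{O}$ a Gauss sum, $\tau^2=\epsilon p$) on the ``non-trivial part'' of $L$, i.e.\ the $K_0(\zeta_p)$-isotypic pure sublattice, which is isomorphic to $M_2^{\,s_2+s_3}$ and on which $\operatorname{End}_{R_0G}$ acts through the discrete valuation ring $\mathcal{O}_0:=R_0[\zeta_p]=\operatorname{End}_{R_0G}(M_2)$.

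With this dictionary, (a) and (b) become a block-by-block analysis of $\phi$ inside $\operatorname{End}_{R_0G}(M_1^{s_1}\oplus M_2^{s_2}\oplus M_3^{s_3})$, using $\operatorname{End}_{R_0G}(M_1)=R_0$, $\operatorname{End}_{R_0G}(M_3)=R_0G$, $\operatorname{Hom}_{R_0G}(M_1,M_2)=\operatorname{Hom}_{R_0G}(M_2,M_1)=0$, and the periodic resolution $\cdots\xrightarrow{\,\cdot N\,}R_0G\xrightarrow{\,\cdot(g-1)\,}R_0G\xrightarrow{\,\varepsilon\,}R_0$ of the trivial module (which yields $\operatorname{Ext}^1_{R_0G}(M_1,M_1)=\operatorname{Ext}^1_{R_0G}(M_1,M_3)=0$, while $\operatorname{Ext}^1_{R_0G}(M_1,M_2)$ is cyclic of length $1$ over $R_0$). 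On the non-trivial part $\phi$ turns $M_2^{\,s_2+s_3}$ into a lattice over $\mathcal{O}_0[\,t\,]/(t^2-\tau^2)$, which is analytically the simple curve singularity $A_{p-2}$ over the discrete valuation ring $\mathcal{O}_0$; its indecomposable lattices are classical and each has both eigenvalue-multiplicities $\le1$, which, together with a gluing argument controlling the off-diagonal ($M_1$--$M_3$ and $M_3$--$M_2$) blocks of $\phi$, forces $b\le1$ and $c\le1$. The bound $a\le2$, equivalently $s_1+s_3\le4$, is forced because only $M_1$ and $M_3$ carry the trivial constituent, $M_3$ carries it once, and the sole obstruction letting an $M_1$ ``fuse'' rather than split off is $\operatorname{Ext}^1_{R_0G}(M_1,M_2)$ — whose length $1$ over $R_0$ (hence $2$ over $R$) is precisely the imprint of the ramification index $2$. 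For (b): if one of $a,b,c$ is zero then $RG$ acts on $L$ through one of the proper ``$2$-component'' epimorphic images of $RG$ in $R\times\mathcal{O}\times\mathcal{O}$ — $R$, $\mathcal{O}$, a node $\{(r,\alpha):\overline r=\overline\alpha\}$, or the $A_{p-2}$-order $R[\zeta_p]$ — each of finite lattice type with all indecomposables having both surviving multiplicities equal to $1$.

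The main obstacle is this block analysis of $\phi$: one must verify that indecomposability of the pair $(L,\phi)$ genuinely forces the stated bounds and not merely the ``generic'' ones. The delicate point is that a superfluous copy of $S_1$ or of $R$ cannot be split off by a bare Ext-vanishing argument, since e.g.\ $\operatorname{Ext}^1_{RG}(R,S_1)\cong k\neq0$; the argument has to exploit the precise shape of the mixing blocks of $\phi$ together with the fact that $\operatorname{End}_{R_0G}(M_i)$ is a discrete valuation ring or $R_0G$ itself. (Alternatively one could list all indecomposable $RG$-lattices directly — they arise from ideals of the overorders of $RG$ in $R\times\mathcal{O}\times\mathcal{O}$ — after which (a) and (b) are pure bookkeeping; but enumerating those overorders, noting that $RG$ is not a Bass order, as the ``three coordinate axes'' overorder shows, is itself the bulk of the work.)
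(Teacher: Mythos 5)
Your setup (the decomposition $KG \cong K \times K(\zeta_p) \times K(\zeta_p)$, the three simple lattices, the descent to $R_0G$ plus an endomorphism $\phi$ with $\phi^2 = \epsilon p$, and the identification of the non-trivial part as a lattice over an $A_{p-2}$-singularity) is sound. But the proposal does not actually prove the bounds: as you say yourself, the ``block analysis of $\phi$'' is ``the main obstacle,'' and the step where indecomposability of the pair $(L,\phi)$ is shown to force $b,c \leq 1$ and $a \leq 2$ is asserted, not argued. The obstruction you flag — $\operatorname{Ext}^1_{RG}(R,S_i) \neq 0$, so a superfluous summand cannot be split off by a bare Ext-vanishing argument — is precisely where the content lies, and ``a gluing argument controlling the off-diagonal blocks'' is a placeholder rather than a proof. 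This is a genuine gap.

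The paper avoids this work by citation: \cite[Lemma 4.1]{Gudivok} lists, up to isomorphism, all indecomposable $R'C_p$-lattices for $R'$ the ring of integers of $\mathbb{Q}_p(\sqrt{\epsilon p})$, and parts (a) and (b) are then read off that list. For the general $R$ in the statement, the paper invokes the last paragraph of the proof of \cite[Proposition 33.16]{CurtisReiner1} to see that every indecomposable $RG$-lattice is a direct summand of $R\otimes_{R'}L'$ for some indecomposable $R'G$-lattice $L'$; since $R/R'$ is unramified, the three simple lattices over $R'$ correspond bijectively to the three over $R$, and composition multiplicities can only decrease on passing to a summand, so the bounds persist. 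This is exactly the ``alternatively one could list all indecomposable $RG$-lattices directly'' route you sketch in your final parenthesis and dismiss as ``the bulk of the work''; the point is that this enumeration is already in the literature and need only be cited. Note also that the two reductions point in opposite directions: you descend to the unramified subring $R_0\subset R$ and must then reconstruct the ramification through $\phi$, whereas the paper lifts from the minimal ramified ring $R'\subseteq R$, needing only that unramified base change preserves simples and indecomposability-up-to-summands.
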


\begin{proof}
If $K'$ is the $p$-adic completion of $\mathbb{Q}(\sqrt{\epsilon p})$ and $R'$ is its ring of integers, then up to isomorphism all indecomposable $R'G$-lattices are explicitly given in \cite[Lemma 4.1]{Gudivok}. These lattices satisfy the statements of the proposition. If $L$ is an indecomposable $RG$-lattice, it is a direct summand of $R \otimes_{R'} L'$, where $L'$ is an indecomposable $R'G$-lattice, by the last paragraph of the proof of \cite[Proposition 33.16]{CurtisReiner1}. So the statements of the proposition still hold for indecomposable $RG$-lattices. 
\end{proof}

\section{Applications}

For a group $G$ we denote by $\chi_i$ an ordinary character of $G$ and by $D_i$ a representation of $G$ affording this character. By $\varphi_i$ we denote a Brauer character and by $\Theta_i$ a representation affording $\varphi_i.$ We write $D_i(u) \sim (\alpha_1,...,\alpha_j)$ or $\Theta_i(u) \sim (\alpha_1,...,\alpha_j)$ to indicate that $\alpha_1,...,\alpha_j$ are the eigenvalues (with multiplicities) of the corresponding matrix. To improve readability, we sometimes group the eigenvalues appearing several times, e.g.\ $D_i(u) \sim (\eigbox{3}{1}, \eigbox{2}{\zeta,\zeta^{-1}})$ indicates that $D_i(u)$ has $1$ three times and $\zeta$ and $\zeta^{-1}$ each twice as eigenvalue. By $\zeta_n$ we will denote some fixed primitive complex $n$-th root of unity. Especially we will use $\zeta_n$ to denote the eigenvalues of a matrix of finite order $n$ over a field of characteristic $p,$ where $p$ is prime to $n,$ in the sense of Brauer. 

Let $K$ be an algebraically closed field, $D$ a $K$-representation of $G$ with character $\chi$ and $u$ a torsion unit in $\V(\mathbb{Z}G)$ such that the characteristic of $K$ does not divide the order of $u$. Let $m$ and $n$ be natural numbers such that $u^{m+n} = u$. Let $D(u^m) \sim (\alpha_1,...,\alpha_\ell)$ and $D(u^n) \sim (\beta_1,...,\beta_\ell).$ As $D(u^m)$ and $D(u^n)$ are simultaneously diagonalizable over $K$ this means $D(u) \sim (\alpha_1\beta_{i_1},...,\alpha_\ell\beta_{i_\ell})$ with $\{i_1,...,i_\ell\} = \{1,...,\ell\}.$ On the other hand $\chi(u) = \sum\limits_{x^G} \varepsilon_x(u)\chi(x)$, where the sum runs over all conjugacy classes $x^G$ of $G$. Comparing these two computations is the basic idea of the HeLP-method. We will use it freely in the following computations.\\

\subsection{The groups $\PSL(2,q)$. Proof of Theorem 1.}
For the rest of the section let $p$ be a prime. Rational conjugacy of torsion units in integral group rings of the groups $\PSL(2,p^f)$ were studied by Hertweck in \cite{HertweckBrauer}. Combining some propositions from that note we directly obtain:

\begin{prop}[Hertweck]\label{Hertweck} Let $G = \PSL(2,p^f)$ and $u$ a torsion unit in $\V(\mathbb{Z}G)$.
\begin{itemize}
\item[a)] If $u$ is of order prime to $p,$ there exists an element in $G$ of the same order as $u.$ If moreover the order of $u$ is prime, $u$ is rationally conjugate to an element in $G.$
\item[b)] If $f=1$ and $p$ divides the order of $u$, then $u$ is of order $p$ and rationally conjugate to an element in $G.$
\item[c)] Assume $p$ is neither 2 nor 3 and $u$ is of order 6. Then $u$ is rationally conjugate to an element in $G$. 
\end{itemize}
\end{prop}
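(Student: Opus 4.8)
The plan is to obtain all three assertions by assembling the relevant results of \cite{HertweckBrauer}: the proposition is explicitly attributed to Hertweck, so the task is to identify which of his statements give parts (a)--(c) and how they fit together. Throughout write $q = p^f$ and $d = \gcd(2,q-1)$, and recall the structure of $G = \PSL(2,q)$ on which everything rests: every non-central $p$-regular element lies in a conjugate of one of two cyclic tori, of orders $(q-1)/d$ and $(q+1)/d$; a Sylow $p$-subgroup is elementary abelian of exponent $p$; and every element of order divisible by $p$ has order exactly $p$, since its centraliser is a $p$-group. The ordinary character table and the $p$-modular decomposition matrices of $\PSL(2,q)$ are completely known, which is what makes the HeLP-method (in the set-up recalled just before this proposition) effective here.

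For part (a), suppose $u$ has order $n$ coprime to $p$. Then every $x$ with $\varepsilon_x(u) \neq 0$ is $p$-regular of order dividing $n$, and likewise for the powers of $u$. Feeding the ordinary irreducible characters of $\PSL(2,q)$ together with the $p$-modular Brauer characters into the HeLP-method confines the classes carrying a nonzero partial augmentation of $u$ (and of each $u^d$) to a single cyclic subgroup of $G$; since the partial augmentations of $u$ sum to $1$, that subgroup must contain an element of order $n$, which is the first claim. If moreover $n$ is prime, then $\varepsilon_1(u)=0$ forces the support of $u$ onto the order-$n$ classes alone, and non-negativity of the eigenvalue multiplicities of $D_i(u)$ forces all partial augmentations but one to vanish; then $u$ is rationally conjugate to a group element by the Marciniak--Ritter--Sehgal--Weiss lemma. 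This is the content of the $p'$-order results of \cite{HertweckBrauer}.

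For part (b) we have $f=1$. Since $\exp(\V(\mathbb{Z}G)) = \exp(G)$ and the $p$-part of $\exp(\PSL(2,p))$ equals $p$, no unit has order divisible by $p^2$; and since $\PSL(2,p)$ has no element of order $p\ell$ for a prime $\ell \neq p$, Hertweck's affirmative answer to the Prime Graph Question for $\PSL(2,p)$ rules out units of that order. A suitable power of a torsion unit whose order is divisible by $p$ but unequal to $p$ would have order $p^2$ or $p\ell$, so in fact $u$ has order exactly $p$; and units of order $p$ are rationally conjugate to group elements by \cite{HertweckBrauer} (a HeLP computation, for $p$ odd carried out with the two irreducible characters of degree $(p\pm1)/2$, whose irrational values on the unipotent classes force one partial augmentation to be $1$ and the rest $0$).

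For part (c), $u$ has order $6$, hence $p \notin \{2,3\}$ and, by part (a), $G$ contains an element of order $6$. By part (a) applied to the primes $3$ and $2$, the powers $u^2$ and $u^3$ are rationally conjugate to elements of $G$, so for every irreducible representation $D_i$ of $G$ (of degree $q$, $q\pm1$, or $(q\pm1)/2$) the eigenvalue lists of $D_i(u^2)$ and $D_i(u^3)$ are determined. Substituting these into the multiplicativity relation $D_i(u) \sim (\alpha_j\beta_{\sigma(j)})_j$ and comparing with $\chi_i(u) = \sum_x \varepsilon_x(u)\chi_i(x)$, the sum taken over the order-$6$ classes, leaves only finitely many admissible tuples of partial augmentations of $u$, and the inadmissible ones are eliminated one by one; this is the order-$6$ proposition of \cite{HertweckBrauer}. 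This last part is where the real work lies and is the main obstacle: knowing $u^2$ and $u^3$ does not make the HeLP system for $u$ automatically positive, so the elimination of the surviving tuples must be done by hand, which is precisely why that note treats order $6$ separately, whereas parts (a) and (b) fall out of the known ordinary and $p$-modular character theory of $\PSL(2,q)$ with little extra effort.
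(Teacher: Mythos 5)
The paper's own proof is a bare citation to Hertweck's Propositions 6.1, 6.3, 6.4, 6.6 and 6.7 in \cite{HertweckBrauer}, and your proposal takes the same route of attributing the three parts to those results; so as far as the source goes you are on target, and your additional sketching of Hertweck's arguments goes beyond what the paper itself records.

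That said, your sketch of part (a) contains a deduction that does not go through as stated. You write that HeLP confines the classes with nonzero partial augmentation to a single cyclic subgroup, and then that since the partial augmentations sum to $1$, that subgroup must contain an element of order $n$. This is not a valid inference: partial augmentations summing to $1$ on a cyclic subgroup $C$ is perfectly compatible with all nonzero $\varepsilon_x(u)$ lying on classes of elements of order strictly dividing $n$, so no element of order $n$ need exist in $C$. Worse, the premise (a single cyclic subgroup) is essentially the conclusion one is trying to prove. The actual difficulty is this: the $p$-regular elements of $\PSL(2,q)$ lie in two families of cyclic tori of coprime orders $(q-1)/d$ and $(q+1)/d$; the exponent bound only gives $n \mid (q^2-1)/d^2$, so one must rule out the case that $n$ factors as $n_1 n_2$ with $n_1 \mid (q-1)/d$, $n_2 \mid (q+1)/d$ and both $n_i > 1$, in which case the support of $u$ could a priori be scattered across both families without any element of order $n$ present. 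Eliminating this case is precisely what Hertweck's explicitly constructed $p$-modular representations $\Theta_i$ are used for, and your sketch elides it. The sketches of parts (b) and (c) are reasonable summaries, modulo the minor point that in part (c) the sum for $\chi_i(u)$ runs over all classes of elements of order dividing $6$, not only the order-$6$ classes.
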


\noindent\emph{Proof.} \cite[Propositions 6.1, 6.3, 6.4, 6.6 and 6.7]{HertweckBrauer}.\\

The HeLP-method verifies the Zassenhaus Conjecture for $\PSL(2,p)$ if $p \leq 17$. We give a quick account: (ZC) is solved for $ p = 2$ already in \cite{HughesPearson}, $p = 3$ in \cite{AllenHobby}, $p=5$ in \cite{LutharPassiA5}, $p = 7$ in \cite{HertweckAlgColloq}, $p \in \{11, 13\}$ in \cite{HertweckBrauer} and $p =  17$ independently in \cite{KiKoStAndrews} and \cite{Gildea}. The HeLP-method also suffices to prove (ZC) for $p = 23$ (see below), but not for $p =19.$ We will always use the character tables and Brauer tables from the ATLAS \cite{ATLAS}\footnote{All tables used are accessible in \gapcom{GAP} \cite{GAP4} via the commands \gapcom{CharacterTable("G");} and \gapcom{CharacterTable("G") mod p;}, where \gapcom{G} is the name of the group, e.g.\ \gapcom{PSL(2,19)} or \gapcom{M10}. The corresponding decomposition matrix for a Brauer table can then be obtained by \gapcom{DecompositionMatrix}.}. We will use througout the \gapcom{GAP} notation for conjugacy classes.

For $G = \PSL(2,p^f)$ and $p>2$ we have $|G| = \frac{(p^f-1)p^f(p^f+1)}{2}$, there are cyclic subgroups of order $\frac{p^f-1}{2}$, $p$ and $\frac{p^f+1}{2}$ in $G$, and every cyclic subgroup of $G$ lies in a conjugate of such a subgroup. There are two conjugacy classes of elements of order $p$ and, if $g$ is an element of order prime to $p$, the only conjugate of $g$ in $\langle g \rangle$ is $g^{-1}$. All this follows from a result of Dickson \cite[Satz 8.27]{Huppert1}. \\

Separating the knwon HeLP-method and the new method we first list the results obtainable only using the HeLP-method.

\begin{lemma}\label{HeLPPSL}
Let $p$ be a prime,  $f \in \mathbb{N}$ and set $G=\PSL(2,p^f).$
\begin{itemize}
\item[a)] If $p$ is neither $2$ nor $3$, then elements in $\V(\mathbb{Z}G)$ of order $4, \ 9$ or $12$ are rationally conjugate to group elements. 
\item[b)] If $p$ is neither $2$ nor $5$ and $u \in \V(\mathbb{Z}G)$ is of order 10, then either $u$ is rationally conjugate to a group element or it has the following partial augmentations: \[(\varepsilon_{\gapcom{2a}}(u), \varepsilon_{\gapcom{5a}}(u),\varepsilon_{\gapcom{5b}}(u),\varepsilon_{\gapcom{10a}}(u),\varepsilon_{\gapcom{10b}}(u)) = (0,1,-1,1,0),\]
if $u^2$ is rationally conjugate to an element in $\gapcom{5a}$, or
\[(\varepsilon_{\gapcom{2a}}(u), \varepsilon_{\gapcom{5a}}(u),\varepsilon_{\gapcom{5b}}(u),\varepsilon_{\gapcom{10a}}(u),\varepsilon_{\gapcom{10b}}(u)) = (0,-1,1,0,1),\]
if $u^2$ is rationally conjugate to an element in $\gapcom{5b}$. The conjugacy classes are listed in a way, such that squares of elements in $\gapcom{10b}$ are laying in $\gapcom{5a}$.
\end{itemize}
\end{lemma}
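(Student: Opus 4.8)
The plan is to carry out a direct HeLP computation for each of the two claims, using Proposition~\ref{Hertweck} to reduce to the cases that the pure eigenvalue method can actually handle, and exploiting the uniform Dickson description of the character tables of $\PSL(2,p^f)$.

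For part a), let $u\in\V(\mathbb{Z}G)$ have order $n\in\{4,9,12\}$. First I would record what is already known: by the Berman--Higman theorem $\varepsilon_1(u)=0$; by Proposition~\ref{Hertweck}a) every proper prime-power power $u^d$ (of order $2$, $3$, or $9$, depending on $n$) is rationally conjugate to a group element, provided such an element exists in $G$ --- and since $p\notin\{2,3\}$, the groups $\PSL(2,p^f)$ contain elements of order $2$ and, when $9\mid|G|$, of order $9$; hence all proper powers of $u$ are rationally conjugate to group elements and we only need to control $\varepsilon_x(u)$ for $x$ running over classes whose order divides $n$. The partial augmentations at classes of non-admissible order vanish by \cite[Theorem 2.7]{MarciniakRitterSehgalWeiss}, so only finitely many integers are in play and $\sum_x\varepsilon_x(u)=1$. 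Then I would feed the relevant ordinary characters (the unipotent character of degree $p^f$, the principal series/discrete series characters of degrees $p^f\pm1$, and their constituents --- all uniformly described by Dickson) into the HeLP constraint: for every such $\chi$, $\chi(u)=\sum_x\varepsilon_x(u)\chi(x)$, and every multiplicity $\frac1n\sum_{d\mid n}\operatorname{Tr}(\zeta_n^{-d})\chi(u^d)$ of an $n$-th root of unity as an eigenvalue of $D(u)$ must be a non-negative integer. For $n=4$ and $n=9$ the only remaining unknowns are $\varepsilon_{4a}$ (resp.\ $\varepsilon_{9a},\varepsilon_{9b},\varepsilon_{9c}$) together with the vanishing of the rest; the eigenvalue-multiplicity inequalities applied to just one or two characters should pin these down to the value forced by a genuine group element. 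For $n=12$ one additionally uses that $u^6$, $u^4$, $u^3$, $u^2$ are all rationally conjugate to group elements, so the only free data are the augmentations at the classes of order $12$ (and possibly $6$, $4$), and again a handful of root-of-unity multiplicity conditions should leave only the ``group element'' solution. The key point is that these computations are insensitive to $p$ and $f$ because the relevant character values on $p$-regular classes are given by the Dickson formulas, so a single argument covers the whole family.

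For part b), let $u$ have order $10$, with $p\notin\{2,5\}$. Again $u^5$ (order $2$) and $u^2$ (order $5$) are rationally conjugate to group elements by Proposition~\ref{Hertweck}a), and in $\PSL(2,p^f)$ there is a single class of involutions but, when $5\mid|G|$, \emph{two} classes $\gapcom{5a},\gapcom{5b}$ of elements of order $5$ (the Galois conjugates in a cyclic subgroup of order $(p^f\pm1)/2$), fused only by $g\mapsto g^{-1}$ inside the cyclic group; correspondingly there are two classes $\gapcom{10a},\gapcom{10b}$ of elements of order $10$ when those exist. So the free partial augmentations are exactly $(\varepsilon_{\gapcom{2a}},\varepsilon_{\gapcom{5a}},\varepsilon_{\gapcom{5b}},\varepsilon_{\gapcom{10a}},\varepsilon_{\gapcom{10b}})$ subject to $\varepsilon_{\gapcom{2a}}+\varepsilon_{\gapcom{5a}}+\varepsilon_{\gapcom{5b}}+\varepsilon_{\gapcom{10a}}+\varepsilon_{\gapcom{10b}}=1$, and the normalization $\varepsilon_{\gapcom{5a}}(u^2)+\varepsilon_{\gapcom{5b}}(u^2)=1$ with exactly one of them $=1$ (since $u^2$ is rationally conjugate to a group element). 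I would split into the two cases according to whether $u^2\in\gapcom{5a}$ or $u^2\in\gapcom{5b}$; by the symmetry $\gapcom{5a}\leftrightarrow\gapcom{5b}$, $\gapcom{10a}\leftrightarrow\gapcom{10b}$ it suffices to do one. Running HeLP with the degree-$(p^f\pm1)$ characters and their order-$5$ constituents (whose values at $\gapcom{5a},\gapcom{5b}$ involve $\zeta_5+\zeta_5^{-1}$ vs.\ $\zeta_5^2+\zeta_5^{-2}$) and with the eigenvalue-multiplicity inequalities for the primitive $10$-th roots of unity, one gets a finite system of linear inequalities in five integer unknowns; solving it should leave precisely the ``group element'' solution together with the one exceptional tuple $(0,1,-1,1,0)$ (and its mirror $(0,-1,1,0,1)$ in the other case) recorded in the statement. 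I would choose the labelling of the order-$10$ classes so that squaring sends $\gapcom{10b}$ into $\gapcom{5a}$, making the two displayed tuples consistent.

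The main obstacle is the bookkeeping of characters that are not individually available in closed form for \emph{every} $\PSL(2,p^f)$: one must be careful that the HeLP constraints used are ones whose coefficients (the character values on the classes of order dividing $n$) are genuinely uniform in $p$ and $f$, rather than accidentally true only for small cases; the Dickson parametrization guarantees this, but verifying that the chosen small set of characters already forces the conclusion --- and, in part b), that the system does not collapse the exceptional tuple --- is where the real work lies. A secondary subtlety is keeping track of which classes of order $4,9,12,10$ actually occur in $\PSL(2,p^f)$ (this depends on $p^f\bmod 8$, $p^f\bmod 9$, etc.), so that ``rationally conjugate to a group element'' is the right target in each arithmetic case; fortunately the statement only claims a conclusion when such elements can exist, so one treats the empty cases vacuously.
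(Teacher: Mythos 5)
Your outline captures the general spirit (HeLP plus eigenvalue-multiplicity constraints, using the uniformity of $\PSL(2,p^f)$), but the concrete character-theoretic input you propose is different from the paper's and in two places hides a real gap.

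First, you propose feeding \emph{ordinary} characters (Steinberg of degree $p^f$, principal and discrete series of degrees $p^f\pm1$) into the HeLP machine. The paper instead uses the defining-characteristic Brauer characters $\varphi_i$ (with representations $\Theta_i$) coming from \cite{HertweckBrauer} and \cite[Lemma 1.2]{SylowLeo}. These $\Theta_i$ have degree $2i+1$ and the explicit, $q$-independent eigenvalue description $\Theta_i(a)\sim(1,\alpha,\alpha^{-1},\dots,\alpha^i,\alpha^{-i})$ on every semisimple element. This is exactly what makes the computation genuinely uniform in $p^f$: one reads off the few low-degree Brauer characters $\varphi_1,\varphi_2,\varphi_3,\varphi_5$ on the relevant classes (Tables~1 and~2) and solves a small integer linear system. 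With ordinary characters the degrees grow with $q$ and the values at classes of orders $4,9,10,12$ depend on whether the class lies in the split or nonsplit torus, i.e.\ on $q$ modulo $8$, $9$, $5$, $12$, so the purported ``single argument covering the whole family'' would in fact require a case analysis that you have not done. The phrase ``insensitive to $p$ and $f$'' is not accurate for the ordinary character table, whereas it is accurate for the $\Theta_i$; that is the point of using them.

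Second, for $n=4$ and $n=9$ you assert that ``the only remaining unknowns are $\varepsilon_{4a}$ (resp.\ $\varepsilon_{9a},\varepsilon_{9b},\varepsilon_{9c}$) together with the vanishing of the rest,'' but the vanishing of $\varepsilon_{\gapcom{2a}}(u)$ (for order $4$) and of $\varepsilon_{\gapcom{3a}}(u)$ (for order $9$) is precisely what has to be proved, and nothing in your sketch does it. The paper gets both for free from \cite[Proposition 6.5]{HertweckBrauer} and only then needs a short eigenvalue argument (with $\Theta_1$ and a $\mathbb{Z}$-basis of $\mathbb{Z}[\zeta_9]$) to finish the order-$9$ case; the order-$4$ case is then immediate. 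Without that input your system still contains $\varepsilon_{\gapcom{2a}}$ resp.\ $\varepsilon_{\gapcom{3a}}$ as a free variable. Similarly, for $n=12$ you list only the classes of orders $12$, $6$, $4$ as free, but $\varepsilon_{\gapcom{2a}}$ and $\varepsilon_{\gapcom{3a}}$ are also a priori nonzero; the paper in fact solves a $6\times 6$ system \eqref{1}--\eqref{6} over $\varepsilon_{\gapcom{2a}},\varepsilon_{\gapcom{3a}},\varepsilon_{\gapcom{4a}},\varepsilon_{\gapcom{6a}},\varepsilon_{\gapcom{12a}},\varepsilon_{\gapcom{12b}}$ obtained from $\varphi_1,\varphi_2,\varphi_3,\varphi_5$. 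The order-$10$ outline is closest to the actual proof (which uses $\varphi_1$ and $\varphi_2$), but again hinges on the Brauer-character eigenvalue formulas rather than the ordinary ones.
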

\begin{proof}
We will use the representations given in \cite{HertweckBrauer} and explicitly proved in \cite[Lemma 1.2]{SylowLeo}, i.e.: If $a$ is an element of order $\frac{p^f+1}{2}$ and $b$ is an element of order $\frac{p^f-1}{2}$ in $G$, then there is a primitive $\frac{p^f+1}{2}$-th root of unity $\alpha$ and a primitive $\frac{p^f-1}{2}$-th root of unity $\beta$ such that for every $i \in \mathbb{N}_0$ there exists a $p$-modular representation $\Theta_i$ of $G$ with character $\varphi_i$ such that
\begin{align*}
\Theta_i(a) \sim (1,\alpha,\alpha^{-1},\alpha^2,\alpha^{-2},...,\alpha^i,\alpha^{-i}) \\
\Theta_i(b) \sim (1,\beta,\beta^{-1},\beta^2,\beta^{-2},...,\beta^i,\beta^{-i}).
\end{align*}
For convenience, the relevant parts of the characters $\varphi_i$ are collected in Tables \ref{bratabPSL_2_23_23} and \ref{bratabPSL_2_19_19} (dots indicate zeros).

Let $p \not\in \{2, 3\}$. If $u \in \V(\mathbb{Z}G)$ is of order 4, then $\varepsilon_{\gapcom{2a}}(u) = 0$ by \cite[Proposition 6.5]{HertweckBrauer}. Thus $\varepsilon_{\gapcom{4a}}(u) = 1$ and $u$ is rationally conjugate to a group element. 
  
Assume $u$ is of order 9. Then $\varepsilon_{\gapcom{3a}}(u) = 0$ by \cite[Proposition 6.5]{HertweckBrauer}. Let $\gapcom{9a}, \gapcom{9b}$ and $\gapcom{9c}$ be the conjugacy classes of elements of order $9$ in $G$ such that if $x \in \gapcom{9a}$, we have $x^2 \in \gapcom{9b}$ and $x^4 \in \gapcom{9c}$. Then \[\varepsilon_{\gapcom{9a}}(u) + \varepsilon_{\gapcom{9b}}(u) + \varepsilon_{\gapcom{9c}}(u) = 1.\]
Let $\zeta$ be a primitive complex $9$-th root of unity such that $\Theta_1(x) \sim (1, \zeta, \zeta^8).$ Since $\Theta_1(u^3) \sim (1,\zeta^3,\zeta^6)$ and $\varphi_1$ is real valued, we get $\Theta_1(u) \sim (1,\gamma,\delta)$ with $(\gamma,\delta) \in \{(\zeta,\zeta^8),(\zeta^2,\zeta^7),(\zeta^4,\zeta^5)\}.$ So
\begin{align*}(1+\zeta + \zeta^8)\varepsilon_{\gapcom{9a}}(u) + (1+\zeta^2+\zeta^7)\varepsilon_{\gapcom{9b}}(u) + (1+\zeta^4+\zeta^5)\varepsilon_{\gapcom{9c}}(u) \\ \in \{\ 1+\zeta+\zeta^8,\ 1+ \zeta^2 + \zeta^7,\ 1 + \zeta^4 + \zeta^5\ \}&.\end{align*}
Using $\zeta^2,\zeta^3,\zeta^4,\zeta^5,\zeta^6,\zeta^7$ as a $\mathbb{Z}$-basis of $\mathbb{Z}[\zeta]$ (cf.\ \cite[Chapter 1, (10.2) Proposition]{Neukirch_english}) this gives
\[(\ -\varepsilon_{\gapcom{9b}}(u) + \varepsilon_{\gapcom{9c}}(u),\  \varepsilon_{\gapcom{9a}}(u) - \varepsilon_{\gapcom{9b}}(u) \ ) \in \{\ (-1,-1), \ (1,0), \ (0,1)\ \}.\]
Combining each of this possibilities with $\varepsilon_{9a}(u) + \varepsilon_{9b}(u) + \varepsilon_{9c}(u) = 1$, we get
\[(\ \varepsilon_{\gapcom{9a}}(u),\ \varepsilon_{\gapcom{9b}}(u),\ \varepsilon_{\gapcom{9c}}(u)\ ) \in \{\ (1,0,0),\ (0,1,0),\ (0,0,1)\ \}.\]
Thus $u$ is rationally conjugate to a group element. This is also a consequence of \cite[Proposition 1]{SylowLeo}.
 
Now assume $u$ is of order 12, then $G$ contains an element of order 12 by Proposition \ref{Hertweck}. So let $\gapcom{2a}, \gapcom{3a}, \gapcom{4a}, \gapcom{6a}, \gapcom{12a}, \gapcom{12b}$ be the conjugacy classes with potentially non-vanishing partial augmentations for $u$. Let $\zeta$ be a primitive 12-th root of unity such that $\Theta_1(\gapcom{12a}) \sim (1,\zeta,\zeta^{11}).$ 

We will use $\zeta, \zeta^4, \zeta^8, \zeta^{11}$ as a $\mathbb{Z}$-basis of $\mathbb{Z}[\zeta]$. (This is a basis since $\varphi(12) = 4$, where $\varphi$ denotes Euler's totient function, and $1 = -\zeta^4 - \zeta^8, \, \ \zeta^2 = -\zeta^8, \zeta^3 = \zeta - \zeta^{11}, \ \zeta^5 = -\zeta^{11}, \  \zeta^6 = -1 = \zeta^4 + \zeta^8, \ \zeta^7 = -\zeta, \ \zeta^9 = -\zeta + \zeta^{11}$ and $\zeta^{10} = -\zeta^4$.) We have 
\begin{eqnarray}
\label{1}
\varepsilon_{\gapcom{2a}}(u) + \varepsilon_{\gapcom{3a}}(u) +\varepsilon_{\gapcom{4a}}(u) +\varepsilon_{\gapcom{6a}}(u) +\varepsilon_{\gapcom{12}a}(u) + \varepsilon_{\gapcom{12}b}(u) = 1.
\end{eqnarray}

\begin{table}[h]\centering
\begin{tabular}{l|c c c c c c c}
{ } & \gapcom{1a} & \gapcom{2a} & \gapcom{3a} & \gapcom{4a} & \gapcom{6a} & \gapcom{12a} & \gapcom{12b} \\ \hline
$\varphi_{1}$ & 3 & $-1$ & $\cdot$ & 1 & 2 & $1+\zeta_{12}+\zeta_{12}^{-1}$ & $1-\zeta_{12}-\zeta_{12}^{-1}$  \\
$\varphi_{2}$ & 5 & 1 & $-1$ & $-1$ & 1 & $2+\zeta_{12}+\zeta_{12}^{-1}$ & $2-\zeta_{12}-\zeta_{12}^{-1}$ \\
$\varphi_{3}$ & 7 & $-1$ & 1 & $-1$ & $-1$ & $2+\zeta_{12}+\zeta_{12}^{-1}$ & $2-\zeta_{12}-\zeta_{12}^{-1}$ \\
$\varphi_{5}$ & 11 & $-1$ & $-1$ & 1 & $-1$ & 1 & 1\\ 
\end{tabular}\\[.4cm]
 \caption{Parts of some $p$-Brauer-characters of $G = \PSL(2,p^f)$ for $p \notin \{2,3\}$ and $24 \mid |G|$.}\label{bratabPSL_2_23_23}
 \end{table}

 Furthermore, $\Theta_1(u^9) \sim (1, \zeta^3, \zeta^9)$ and $\Theta_1(u^4) \sim (1, \zeta^4, \zeta^8).$ Thus, as $\varphi_1$ has only real values, $\Theta_1(u) \sim X$ with $X \in \{(1,\zeta^5,\zeta^7),(1,\zeta,\zeta^{11})\} = \{(1,-\zeta^{11},-\zeta),(1,\zeta,\zeta^{11})\}.$ Hence, using Table \ref{bratabPSL_2_23_23}, we obtain $-\varepsilon_{\gapcom{2a}}(u) + \varepsilon_{\gapcom{4a}}(u) + 2\varepsilon_{\gapcom{6a}}(u) + (1+\zeta + \zeta^{11})\varepsilon_{\gapcom{12a}}(u) + (1-\zeta-\zeta^{11})\varepsilon_{\gapcom{12b}}(u) \in \{ 1 + \zeta + \zeta^{11}, 1 - \zeta - \zeta^{11}\}.$  Using again $\zeta, \ \zeta^4,\ \zeta^8, \ \zeta^{11}$ as a basis of $\mathbb{Z}[\zeta]$ this gives 
\begin{align}
\label{2}
\varepsilon_{\gapcom{12a}}(u)-\varepsilon_{\gapcom{12b}}(u) &= \pm 1, \\
\label{3}
-\varepsilon_{\gapcom{2a}}(u) + \varepsilon_{\gapcom{4a}}(u) + 2\varepsilon_{\gapcom{6a}}(u) + \varepsilon_{\gapcom{12a}}(u) + \varepsilon_{\gapcom{12b}}(u) &= 1. 
\end{align} 

Proceeding the same way we have $\Theta_2(u^9) \sim (1,-1,-1, \zeta^3,\zeta^9)$, $\Theta_2(u^4) \sim (1,\zeta^4,\zeta^8,\zeta^4,\zeta^8)$ and $\Theta_2(u) \sim X$ with $X \in \{(1, \zeta^2, \zeta^{10}, \zeta, \zeta^{11}), (1,\zeta^2, \zeta^{10}, \zeta^5, \zeta^{7})\} .$ So, by Table \ref{bratabPSL_2_23_23} and $\zeta^2+\zeta^{10}=1$, we get $\varepsilon_{\gapcom{2a}}(u) - \varepsilon_{\gapcom{3a}}(u) - \varepsilon_{\gapcom{4a}}(u) + \varepsilon_{\gapcom{6a}}(u) + (2+\zeta+\zeta^{11}) \varepsilon_{\gapcom{12a}}(u) + (2-\zeta-\zeta^{11}) \varepsilon_{\gapcom{12b}}(u) \in \{2 + \zeta + \zeta^{11}, 2 - \zeta - \zeta^{11}\}$. Comparing coefficients of $\zeta^4$ gives
\begin{eqnarray}
\label{4}
\varepsilon_{\gapcom{2a}}(u) - \varepsilon_{\gapcom{3a}}(u) - \varepsilon_{\gapcom{4a}}(u) + \varepsilon_{\gapcom{6a}}(u) + 2\varepsilon_{\gapcom{12a}}(u) + 2\varepsilon_{\gapcom{12b}}(u) = 2.
\end{eqnarray}

Applying the same for $\varphi_3$ we obtain $\Theta_3(u^9) \sim (1,-1,-1, \zeta^3, \zeta^9, \zeta^3,\zeta^9)$, $\Theta_3(u^4) \sim (1,\zeta^4,\zeta^8,1,\zeta^4,\zeta^8,1)$ and $\Theta_3(u) \sim (X)$ with \begin{align*} X \in \{ & (1,-1,-1, \zeta, \zeta^{11}, \zeta, \zeta^{11}), (1,-1,-1,\zeta,\zeta^{11},\zeta^5,\zeta^7),  (1,-1,-1,\zeta^5,\zeta^7,\zeta^5,\zeta^7),\\ & (1,\zeta^2, \zeta^{10}, \zeta^3, \zeta^{9}, \zeta, \zeta^{11}), (1,\zeta^2, \zeta^{10}, \zeta^3, \zeta^{9}, \zeta^5, \zeta^{7})\}.\end{align*} So, by Table \ref{bratabPSL_2_23_23}, $\zeta^2+\zeta^{10}=1$ and $\zeta^3+\zeta^9=0$, we get \begin{align*} \varepsilon_{\gapcom{2a}}(u) + \varepsilon_{\gapcom{3a}}(u) - \varepsilon_{\gapcom{4a}}(u) - \varepsilon_{\gapcom{6a}}(u) + (2+\zeta+\zeta^{11}) \varepsilon_{\gapcom{12a}}(u) + (2-\zeta-\zeta^{11}) \varepsilon_{\gapcom{12b}}(u) &\\ \in \{-1 + 2\zeta + 2\zeta^{11},\ -1,\ -1 -2\zeta - 2\zeta^{11},\ 2+\zeta+\zeta^{11},\ 2 - \zeta - \zeta^{11}\}&. \end{align*} As the first three possibilities would give $\varepsilon_{\gapcom{12a}}(u) - \varepsilon_{\gapcom{12b}}(u) \in \{-2,0,2\}$, contradicting \eqref{2}, only the last two remain and give
\begin{eqnarray}
\label{5}
-\varepsilon_{\gapcom{2a}}(u) + \varepsilon_{\gapcom{3a}}(u) - \varepsilon_{\gapcom{4a}}(u) - \varepsilon_{\gapcom{6a}}(u) + 2\varepsilon_{\gapcom{12a}}(u) + 2\varepsilon_{\gapcom{12b}}(u) = 2.
\end{eqnarray}

In the same way 
$\Theta_{5}(u^6) \sim ( \eigbox{5}{1}, \eigbox{6}{-1})$,
$\Theta_{5}(u^9) \sim ( \eigbox{3}{1}, \eigbox{2}{-1}, \eigbox{3}{\zeta^3,\zeta^9,})$,
$\Theta_{5}(u^4) \sim ( \eigbox{3}{1}, \eigbox{4}{\zeta^4, \zeta^8})$
 and  $\Theta_{5}(u) \sim (1, \zeta, \zeta^2, \zeta^3, \zeta^4, \zeta^5, \zeta^7, \zeta^8,\zeta^9, \zeta^{10},\zeta^{11})$ (note that $\varphi_{5}(u)$ must not only be real, but even rational, as $\varphi_{5}$ has only rational values). Thus $-\varepsilon_{\gapcom{2a}}(u) - \varepsilon_{\gapcom{3a}}(u) + \varepsilon_{\gapcom{4a}}(u) - \varepsilon_{\gapcom{6a}}(u) +  \varepsilon_{\gapcom{12a}}(u) +  \varepsilon_{\gapcom{12b}}(u) = 1$ giving 
 \begin{eqnarray}
\label{6} 
  -\varepsilon_{\gapcom{2a}}(u) - \varepsilon_{\gapcom{3a}}(u) + \varepsilon_{\gapcom{4a}}(u) - \varepsilon_{\gapcom{6a}}(u) + \varepsilon_{\gapcom{12a}}(u) + \varepsilon_{\gapcom{12b}}(u) = 1.
 \end{eqnarray}
 
Now subtracting \eqref{1} from \eqref{6} gives $\varepsilon_{\gapcom{2a}}(u)+\varepsilon_{\gapcom{3a}}(u)+\varepsilon_{\gapcom{6a}}(u)=0$ while subtracting \eqref{4} from \eqref{5} gives $\varepsilon_{\gapcom{2a}}(u)-\varepsilon_{\gapcom{3a}}(u)+\varepsilon_{\gapcom{6a}}(u)=0$. Thus $\varepsilon_{\gapcom{3a}}(u)=0$. Then subtracting \eqref{1} from \eqref{3} gives $-2\varepsilon_{\gapcom{2a}}(u)+\varepsilon_{\gapcom{6a}}(u)=0$, so $\varepsilon_{\gapcom{2a}}(u)=\varepsilon_{\gapcom{6a}}(u)=0.$ Now multiplying \eqref{1} by 2 and subtracting it from \eqref{4} gives $\varepsilon_{\gapcom{4a}}(u)=0.$ Using \eqref{1} and \eqref{2} this leaves only the trivial possibilities $(\varepsilon_{\gapcom{2a}}(u),\varepsilon_{\gapcom{3a}}(u),\varepsilon_{\gapcom{4a}}(u),\varepsilon_{\gapcom{6a}}(u),\varepsilon_{\gapcom{12a}}(u),\varepsilon_{\gapcom{12b}}(u)) \in \{(0,0,0,0,1,0),(0,0,0,0,0,1)\}$.

For part b) assume $p \not\in \{2, 5\}$, $u \in \V(\mathbb{Z}G)$ is of order 10 and let $\zeta$ be a primitive $5$-th root of unity s.t. $\Theta_1(\gapcom{10a}) \sim (1,-\zeta,-\zeta^{4})$. Assume further that $u^2$ is rationally conjugate to an element in $\gapcom{5a}$. 

\begin{table}[h]\centering
\begin{tabular}{l|c c c c c c}
{ } & \gapcom{1a} & \gapcom{2a} & \gapcom{5a} & \gapcom{5b} & \gapcom{10a} & \gapcom{10b} \\ \hline
$\varphi_{1}$ & 3 & $-1$ & $-\beta$ & $-\alpha$  &  $-2\alpha -\beta$ & $-\alpha -2\beta$  \\
$\varphi_{2}$  & 5 & 1 & $\cdot$ & $\cdot$ & $-2\alpha$ & $ -2\beta$ \\
\end{tabular} \\[.4cm]

\begin{tabular}{rll} with & $\alpha = \zeta_5 + \zeta_5^4$, &  $\beta = \zeta_5^2 + \zeta_5^3$ \end{tabular}
\caption{Part of some $p$-Brauer-characters of $G = \PSL(2,p^f)$ with $p \notin \{2,5\}$ and $20 \mid |G|$}\label{bratabPSL_2_19_19}
\end{table}
We have 
\[\varepsilon_{\gapcom{2a}}(u) + \varepsilon_{\gapcom{5a}}(u) +\varepsilon_{\gapcom{5b}}(u) +\varepsilon_{\gapcom{10a}}(u) +\varepsilon_{\gapcom{10b}}(u) = 1.\]
Furthermore, $\Theta_1(u^5) \sim(1,-1,-1)$ and $\Theta_1(u^6) \sim (1,\zeta^2,\zeta^3).$ As $\varphi_1$ has only real values, we get $\Theta_1(u) \sim (1,-\zeta^2, -\zeta^3).$ Thus  
\begin{align*}
 - \varepsilon_{\gapcom{2a}}(u) \ &+ \ (-\zeta^2 - \zeta^3) \varepsilon_{\gapcom{5a}}(u) \ + \ (-2\zeta - \zeta^2 - \zeta^3 - 2\zeta^4) \varepsilon_{\gapcom{10a}}(u)  \\
  &+ \ (-\zeta - \zeta^4) \varepsilon_{\gapcom{5b}}(u) \  + \ (-\zeta - 2\zeta^2 - 2\zeta^3 - \zeta^4) \varepsilon_{\gapcom{10b}}(u)  \ = \ 1 - \zeta^2 - \zeta^3.
\end{align*} Using $\zeta,\ \zeta^2,\ \zeta^3,\ \zeta^4$ as a $\mathbb{Z}$-basis of $\mathbb{Z}[\zeta]$ we obtain 
\begin{align*}
\varepsilon_{\gapcom{2a}}(u) - \varepsilon_{\gapcom{5b}}(u) -2\varepsilon_{\gapcom{10a}}(u) - \varepsilon_{\gapcom{10b}}(u) & = -1, \\ \varepsilon_{\gapcom{2a}}(u) - \varepsilon_{\gapcom{5a}}(u) -\varepsilon_{\gapcom{10a}}(u) -2 \varepsilon_{\gapcom{10b}}(u) &= -2.
\end{align*}
 In the same way we get $\Theta_2(u^5) \sim (1,1,1,-1,-1)$, $\Theta_2(u^6) \sim (1,\zeta,\zeta^2,\zeta^3,\zeta^4)$ and $\Theta_2(u) \sim X$ with $X \in \{(1,-\zeta,\zeta^2, \zeta^3, - \zeta^4), (1,\zeta,-\zeta^2,-\zeta^3,\zeta^4)\}.$ We have $\varphi_2(u) = \varepsilon_{\gapcom{2a}}(u) -2(\zeta + \zeta^4)\varepsilon_{\gapcom{10a}}(u) - 2(\zeta^2 +  \zeta^3) \varepsilon_{\gapcom{10b}}(u).$ Hence 
 \[(\ -\varepsilon_{\gapcom{2a}}(u)-2\varepsilon_{\gapcom{10a}}(u),\ -\varepsilon_{\gapcom{2a}}(u) - 2\varepsilon_{\gapcom{10b}}(u)\ ) \in \{\ (-2,0),\ (0,-2)\ \}.\] 
Combining these equations with the equations obtained above we get \[ (\varepsilon_{\gapcom{2a}}(u), \varepsilon_{\gapcom{5a}}(u),\varepsilon_{\gapcom{5b}}(u),\varepsilon_{\gapcom{10a}}(u),\varepsilon_{\gapcom{10b}}(u)) \in \{\ (0,1,-1,1,0),\ (0,0,0,0,1)\ \}.\]
If $u^2$ is rationally conjugate to an element in $\gapcom{5b}$, then replacing every $\zeta^i$ with $\zeta^{2i}$ and doing the same computations as above gives the result. \end{proof}

\begin{proof}[Proof of Theorem 1] By Proposition \ref{Hertweck}, to obtain the Zassenhaus Conjecture for $G = \PSL(2,23)$ only elements of order $4$ and $12$ in $\V(\mathbb{Z}G)$ need to be checked and these are rationally conjugate to group elements by Lemma \ref{HeLPPSL}. So from now on let $G = \PSL(2,19)$. Then by Proposition \ref{Hertweck} only elements of order 9 and 10 need to be checked, but elements of order 9 are already handled in Lemma \ref{HeLPPSL}. So assume $u \in \V(\mathbb{Z}G)$ is of order 10 and not rationally conjugate to a group element. If $u$ is not rationally conjugate to an element of $G$, then also $u^3$ is not rationally conjugate to an element in $G$. Furthermore, if $u^2$ is rationally conjugate to an element in $\gapcom{5a}$, then $u^6$ is rationally conjugate to an element in $\gapcom{5b}$. So we may assume that $u^2$ is conjugate to an element in $\gapcom{5a}$ and by Lemma \ref{HeLPPSL} we get 
\[(\varepsilon_{\gapcom{2a}}(u), \varepsilon_{\gapcom{5a}}(u),\varepsilon_{\gapcom{5b}}(u),\varepsilon_{\gapcom{10a}}(u),\varepsilon_{\gapcom{10b}}(u)) = (0,1,-1,1,0).\]

We give the parts of the character tables relevant for our proof in the Tables \ref{chartabPSL_2_19}  and \ref{bratabPSL_2_19_5}.

\begin{table}[h]\centering
\begin{tabular}{l|c c c c c c}
{ } & \gapcom{1a} & \gapcom{2a} & \gapcom{5a} & \gapcom{5b} & \gapcom{10a} & \gapcom{10b} \\ \hline
$\chi_{\gapcom{18}}$ & 18 & $-2$ & $-\alpha$ & $-\beta$ & $-\alpha$ & $-\beta$ \\
$\chi_{\gapcom{19}}$ & 19 & $-1$ & $-1$ & $-1$ & $-1$ & $-1$ \\
\end{tabular} \\[.4cm]

\begin{tabular}{rll} with & $\alpha = \zeta_5 + \zeta_5^4$, &  $\beta = \zeta_5^2 + \zeta_5^3$   \end{tabular}
\caption{Part of the ordinary character table of $\PSL(2,19)$}\label{chartabPSL_2_19}
\end{table}

\begin{table}[h]\centering
\subfloat[Part of the Brauer table]{
\begin{tabular}{l|c c}
{ } & \gapcom{1a} & \gapcom{2a} \\ \hline
$\varphi_{\gapcom{1}}$ & 1 & 1  \\
$\varphi_{\gapcom{18}}$  & 18 & -2 \\
\end{tabular}}
\qquad
\subfloat[Part of the decomposition matrix]{
\begin{tabular}{l|c c }
{ } & $\varphi_{\gapcom{1}}$  & $\varphi_{\gapcom{18}}$ \\ \hline
$\chi_{\gapcom{1}}$ & 1 & $\cdot$ \\
$\chi_{\gapcom{18}}$ & $\cdot$ & 1  \\
$\chi_{\gapcom{19}}$ & 1 & 1\end{tabular}}
\caption{Part of Brauer table and decomposition matrix of $\PSL(2,19)$ for the prime $5$}\label{bratabPSL_2_19_5}
\end{table}

Let $D_{\gapcom{18}}$ and $D_{\gapcom{19}}$ be representations affording the characters $\chi_{\gapcom{18}}$ and $\chi_{\gapcom{19}}$ given in Table \ref{chartabPSL_2_19}. We compute the eigenvalues of $D_{\gapcom{18}}(u)$ and $D_{\gapcom{19}}(u)$ using the character table in the way demostrated above: We have $D_{\gapcom{18}}(u^5) \sim (\eigbox{8}{1}, \eigbox{10}{-1})$ and $D_{\gapcom{18}}(u^6) \sim (\eigbox{3}{1, \zeta, \zeta^2, \zeta^3, \zeta^4}, \eigbox{1}{1, \zeta, \zeta^4})$. Since $\chi_{\gapcom{18}}(u) = \chi_{\gapcom{18}}(\gapcom{5a}) - \chi_{\gapcom{18}}(\gapcom{5b}) + \chi_{\gapcom{18}}(\gapcom{10a}) = -2(\zeta + \zeta^4) + (\zeta^2 + \zeta^3)$ we obtain
\[D_{\gapcom{18}}(u) \sim (1,\zeta,\zeta^2,\zeta^3,\zeta^4,1,\zeta^2,\zeta^3,-1,-\zeta,-\zeta^2,-\zeta^3,-\zeta^4,-1,-\zeta,-\zeta^4,-\zeta,-\zeta^4).\]
Moreover $D_{\gapcom{19}}(u^5) \sim (\eigbox{9}{1}, \eigbox{10}{-1})$ and $D_{\gapcom{19}}(u^6) \sim (\eigbox{3}{1}, \eigbox{4}{\zeta, \zeta^2, \zeta^3, \zeta^4})$. Since $\chi_{\gapcom{19}}(u) = -1$ we get
\[D_{\gapcom{19}}(u) \sim (1,\zeta,\zeta^2,\zeta^3,\zeta^4,\zeta,\zeta^2,\zeta^3,\zeta^4,-1,-\zeta,-\zeta^2,-\zeta^3,-\zeta^4,-1,-\zeta,-\zeta^2,-\zeta^3,-\zeta^4).\]

By \cite{SLSchur} the Schur indices of all irreducible representations of $G$ are 1 and thus we may assume that $D_{\gapcom{19}}$ is a $\mathbb{Q}_5$-representation while $D_{\gapcom{18}}$ is a $K$-representation, where $K$ is the $5$-adic completion of $\mathbb{Q}(\zeta_9 + \zeta_9^{-1}, \zeta_5 + \zeta_5^{-1}) = \mathbb{Q}(\zeta_9 + \zeta_9^{-1}, \sqrt{5}).$ For both these fields, the rings of integers are principal ideal domains, so by \cite[Proposition 23.16]{CurtisReiner1} we may assume that $D_{\gapcom{19}}$ is a $\mathbb{Z}_5$-representation and $D_{\gapcom{18}}$ is a $R$-representation, where $R$ denotes the ring of integers of $K.$ Let $L_{\gapcom{19}}$ and $L_{\gapcom{18}}$ be a $\mathbb{Z}_5G$-lattice and an $RG$-lattice respectively affording these representations. As usual, denote by $\bar{\phantom{x}}$ the reduction modulo the maximal ideal of $\mathbb{Z}_5$ and that one of $R$. Denote by $k$ a field of characteristic $5$ which contains $\bar{\mathbb{Z}}_5$ and $\bar{R}$ and affords all irreducible $5$-modular representations of $G$. 


We may assume that $\bar{L}_{\gapcom{19}}$ contains $\bar{L}_{\gapcom{18}}$ as submodule (multiplying a module by the augmentation ideal $I(kG)$ annihilates precisely the trivial $kG$-submodules).
$\bar{L}_{\gapcom{19}}/\bar{L}_{\gapcom{18}}$ is a trivial $kG$-module, so also a trivial $k\langle \bar{u} \rangle$-module. By Proposition \ref{prop_lattices_cyclic_group}, as $\mathbb{Z}_5\langle u \rangle$-lattice and as $R\langle u \rangle$-lattice respectively, we may write $L_{\gapcom{19}} \cong L^{+}_{\gapcom{19}} \oplus L^{-}_{\gapcom{19}}$ and $L_{\gapcom{18}} \cong L^+_{\gapcom{18}} \oplus L^{-}_{\gapcom{18}}$ respectively, such that the composition factors of $\bar{L}^+_{\gapcom{i}}$ are all trivial and the composition factors of $\bar{L}^{-}_{\gapcom{i}}$ are all non-trivial as $k\langle \bar{u} \rangle$-modules for $\gapcom{i} \in \{\gapcom{18},\gapcom{19}\}$. As $\bar{L}_{\gapcom{19}}/\bar{L}_{\gapcom{18}}$ is a trivial module, we have $\bar{L}^{-}_{\gapcom{18}} \cong \bar{L}^{-}_{\gapcom{19}}$ (as $k\langle \bar{u} \rangle$-modules).

By the computations above the eigenvalues of $D_{\gapcom{19}}(u)$, which are not $5$-th roots of unity, i.e.\ which contribute to $L^{-}_{\gapcom{19}}$ by Proposition \ref{prop_lattices_cyclic_group}, are
\[ (-1,-\zeta,-\zeta^2,-\zeta^3,-\zeta^4,-1,-\zeta,-\zeta^2,-\zeta^3,-\zeta^4).\]
Recall that we denote by $(\mathbb{Z}_5)^-, I(\mathbb{Z}_5C_5)^-$ and $(\mathbb{Z}_5C_5)^-$ the indecomposable $\mathbb{Z}_5C_{10}$-lattices of rank $1$, $4$ and $5$ respectively, which have non-trivial composition factors, cf.\ Proposition \ref{prop_lattices_cyclic_group} and Proposition \ref{prop_lattices_cyclic_group_of_prime_order}. By Proposition \ref{prop_lattices_cyclic_group_of_prime_order} the eigenvalues imply $L^{-}_{\gapcom{19}} \cong X$ with
\[X \in \{\ {2(\mathbb{Z}_5)^- \oplus 2I(\mathbb{Z}_5C_5)^-}, \  {(\mathbb{Z}_5)^{-} \oplus I(\mathbb{Z}_5C_5)^{-} \oplus (\mathbb{Z}_5C_5)^{-}}, \ {2(\mathbb{Z}_5C_5)^{-}}\ \}.\]
In any case $\bar{L}^{-}_{\gapcom{19}}$ has two indecomposable summands of $k$-dimension at least $4$, as indecomposable summands of $X$ stay indecomposable after reduction by Proposition 
\ref{prop_lattices_cyclic_group_of_prime_order}.

On the other hand the eigenvalues of $D_{\gapcom{18}}(u)$, which are not 5-th roots of unity are 
\[(-1,-\zeta,-\zeta^2,-\zeta^3,-\zeta^4,-1,-\zeta,-\zeta^4,-\zeta,-\zeta^4).\]
Note that the simple $R\langle u \rangle$-lattice $S$ affording the eigenvalues $(-\zeta^2, -\zeta^3)$ appears exactly once as a composition factor of $L_{\gapcom{18}}^{-}$. Let $L^{-}_{\gapcom{18}} \cong Y \oplus Z$ such that $Y$ is indecomposable and $S$ is a composition factor of $Y$. There are at most two non-isomorphic simple $R\langle u \rangle$-lattices involved in $Z$, namely the one affording eigenvalues ($-\zeta, -\zeta^4)$ and the one affording the eigenvalue $-1$. Hence by Proposition \ref{num_of_components}, the maximal $R$-rank of an indecomposable summand of $Z$ is 3. Again by Proposition \ref{num_of_components} both simple lattices corresponding to the eigenvalues $(-\zeta, -\zeta^{4})$ and $(-\zeta^2,-\zeta^{3})$, which both have $R$-rank 2, appear each at most once as a composition factor of $Y$, while the simple lattice corresponding to the eigenvalue $-1$ appears at most twice. Thus the maximal $R$-rank of $Y$ is 6. So in any case $\bar{L}_{\gapcom{18}}^{-}$ does not posses two indecomposable direct summands of dimension at least 4. But since the Krull-Schmidt-Azumaya Theorem holds, we obtain a contradiction to $\bar{L}^{-}_{\gapcom{18}} \cong \bar{L}^{-}_{\gapcom{19}}$ and the above paragraph. 
\end{proof}  

\begin{rem}
Naturally the next candidate to study the Zassenhaus Conjecture for $\PSL(2,p)$ would be $\PSL(2,29)$. After applying the HeLP-method for this group, units of order 14 remain critical. Using the method from above would in any case involve $RC_7$-lattices, where $R$ denotes the ring of integers of $\mathbb{Q}_7(\zeta_7 + \zeta_7^{-1}).$ Since the representation type of $RC_7$ is wild, see \cite{Dieterich}, this seems hopeless using only theoretical arguments. However the degree of the representations involved is at most 30, so a computational approach does not seem out of reach. This is however not part of this paper.
\end{rem}

\subsection{Proof of Theorem 2.}

Let $G = \Aut(A_6)$, the automorphism group of the alternating group of degree $6$. Both, $M_{10}$ and $\PGL(2,9)$, are subgroups of index $2$ in $G$ (cf.\ Figure \ref{normal_subgroups_AutA6}). There is a unique conjugacy class \gapcom{3a} of elements of order 3 in $G$ (it is the union of the two conjugacy classes of elements of order $3$ in $A_6$ and has length $80$). This class is also the conjugacy class of elements of order $3$ in $M_{10}$ and $\PGL(2,9)$. Furthermore there is a conjugacy class of $G$ consisting of all the involutions in $A_6$ of length $45$, which we will denote by \gapcom{2a}. This is clearly also a conjugacy class of $M_{10}$ and $\PGL(2,9)$. Let $u$ be a unit of order 6 in $\V(\mathbb{Z}M_{10})$ or in $\V(\mathbb{Z}\PGL(2,9))$. By \cite[Proof of Theorem 3.1]{KiKo}, if such a unit exists, then all partial augmentations of $u$ vanish except at the classes \gapcom{2a} and \gapcom{3a} and then $(\varepsilon_{\gapcom{2a}}(u),\varepsilon_{\gapcom{3a}}(u)) = (-2,3)$. We will first compute in $\Aut(A_6)$ and then obtain contradictions to the existence of $u$ via restriction to $M_{10}$ and $\PGL(2,9)$.

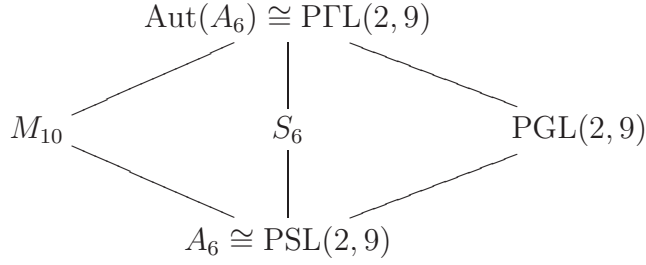
\begin{figure}[h]
\begin{center}
\begin{equation*}
  \xymatrix{
  & \Aut(A_6) \cong {\text{P} \Gamma \text{L}}(2,9) \ar@{-}[dl] \ar@{-}[d] \ar@{-}[dr] & \\
  M_{10} \ar@{-}[dr] & S_6 \ar@{-}[d] & \PGL(2,9) \ar@{-}[dl] \\
  & A_6 \cong \PSL(2,9) & }
\end{equation*}
\caption{Almost simple groups containing $A_6$. Indices of consecutive subgroups are 2.}
\label{normal_subgroups_AutA6}
\end{center}
\end{figure}


The relevant parts of the character tables\footnote{These tables can be obtained in \gapcom{GAP} by calling \gapcom{CharacterTable("A6.2\^{}2");} and \gapcom{CharacterTable("A6.2\^{}2") mod 3;} respectively.} of $G$ are given in Table \ref{chartabA6}, the corresponding decomposition matrix in Table \ref{decompmatA6}. By $\chi_{\gapcom{10}}$ we denote the irreducible character of degree $10$ which contains a trivial constituent after reduction modulo $3$.

\begin{table}[h]
\centering
\subfloat[Used part of the ordinary character table]{
\begin{tabular}{l|c c c}
{ } & \gapcom{1a} & \gapcom{2a} & \gapcom{3a} \\ \hline
$\chi_{\gapcom{1a}}$ & 1 & 1 & 1 \\
$\chi_{\gapcom{1b}}$ & 1 & 1 & 1 \\
$\chi_{\gapcom{10}}$ & 10 & 2 & 1  \\
$\chi_{\gapcom{20}}$ & 20 & $-4$ & 2 \\
\end{tabular}}
\qquad
\subfloat[Used part of the Brauer table for $p=3$]{
\begin{tabular}{l|c c c c}
{ } & \gapcom{1a} & \gapcom{2a} & \gapcom{5a} & \gapcom{2b} \\ \hline
$\varphi_{\gapcom{1a}}$ & $1$ & $1$ & $1$ & $1$  \\
$\varphi_{\gapcom{1b}}$ & $1$ & $1$ & $1$ & $-1$ \\
$\varphi_{\gapcom{6a}}$ & $6$ & $-2$ &	 $1$ & . \\
$\varphi_{\gapcom{6b}}$ & $6$ & $-2$ & $1$ & $.$ \\
$\varphi_{\gapcom{8}}$ & $8$ & $.$ & $-2$ & $.$ \\
\end{tabular}}
\caption{Parts of the ordinary character table and the Brauer table for the prime $3$ for the group $\Aut(A_6)$}\label{chartabA6}
\end{table}

\begin{table}[h]\centering
\begin{tabular}{l|c c c c c}
{ } & $\varphi_{\gapcom{1a}}$  & $\varphi_{\gapcom{1b}}$ & $\varphi_{\gapcom{6a}}$ & $\varphi_{\gapcom{6b}}$ & $\varphi_{\gapcom{8}}$ \\ \hline
$\chi_{\gapcom{1a}}$ & $1$ & $\cdot$ & $\cdot$ & $\cdot$ & $\cdot$ \\
$\chi_{\gapcom{1b}}$ & $\cdot$ & $1$ & $\cdot$ & $\cdot$ & $\cdot$ \\
$\chi_{\gapcom{10}}$ & $1$ & $1$ & $\cdot$ & $\cdot$ & $1$  \\
$\chi_{\gapcom{20}}$ & $\cdot$ & $\cdot$ & $1$ & $1$ & $1$ \\
\end{tabular}
\caption{Part of the decomposition matrix of $\Aut(A_6)$ for the prime $3$}\label{decompmatA6}
\end{table}

Denote by $\zeta$ a complex primitive $3$-rd root of unity. Using the HeLP-method and the fact that each $\chi_{\gapcom{i}}$ is real valued, we obtain
\begin{align*} D_{\gapcom{10}}(u) &\sim (\eigbox{2}{1}, \eigbox{2}{\zeta, \zeta^2}, \eigbox{2}{-1}, \eigbox{1}{-\zeta, -\zeta^2} ),  \\ D_{\gapcom{20}}(u) &\sim (\eigbox{8}{1}, \eigbox{6}{-\zeta, -\zeta^2} ). \end{align*}

This can be computed in the way demonstrated above: As $u^4$ is rationally conjugate to an element in \gapcom{3a} and $u^3$ is rationally conjugate to an element in \gapcom{2a}, we have $\chi_{\gapcom{10}}(u^4) = \chi_{\gapcom{10}}(\gapcom{3a}) = 1$ and $\chi_{\gapcom{10}}(u^3)= \chi_{\gapcom{10}}(\gapcom{2a})=2$. This gives 
$D_{\gapcom{10}}(u^4) \sim (\eigbox{4}{1}, \eigbox{3}{\zeta, \zeta^2})$ and 
$D_{\gapcom{10}}(u^3) \sim (\eigbox{6}{1}, \eigbox{4}{-1} )$. Now $\chi_{\gapcom{10}}(u) = \varepsilon_{\gapcom{2a}}(u)\chi_{\gapcom{10}}(\gapcom{2a}) + \varepsilon_{\gapcom{3a}}(u)\chi_{\gapcom{10}}(\gapcom{3a}) = -1$ and as the eigenvalues of $D_{\gapcom{10}}(u)$ are products of the eigenvalues of $D_{\gapcom{10}}(u^4)$ and $D_{\gapcom{10}}(u^3)$, this gives the claimed eigenvalues. 

Moreover we have $\chi_{\gapcom{20}}(u^4)=\chi_{\gapcom{20}}(\gapcom{3a})=2$ and $\chi_{\gapcom{20}}(u^3)=\chi_{\gapcom{20}}(\gapcom{2a})=-4.$ So 
\[ D_{\gapcom{20}}(u^4) \sim (\eigbox{8}{1}, \eigbox{6}{\zeta, \zeta^2}) \quad \text{and} \quad D_{\gapcom{20}}(u^3) \sim (\eigbox{8}{1}, \eigbox{12}{-1}).\] Since $\chi_{\gapcom{20}}(u)=\varepsilon_{\gapcom{2a}}(u)\chi_{\gapcom{20}}(\gapcom{2a})+\varepsilon_{\gapcom{3a}}(u)\chi_{\gapcom{20}}(\gapcom{3a}) = 14$ we obtain the claimed eigenvalues.\\

As all the character values of all ordinary characters of $G$ are integers on all conjugacy classes of $G$, we may assume by a theorem of Fong \cite[Corollary 10.13]{Isaacs} that all ordinary representations mentioned above are $K$-representations, where $K$ is the $3$-adic completion of an extension of $\mathbb{Q}$ which is unramified at $3$. So if $R$ is the ring of integers of $K$ we may assume that they are even $R$-representations. Let $P$ be the maximal ideal of $R$ and let $\bar{\phantom{x}}$ denote the reduction modulo $P.$ Let $k$ be a finite field of characteristic 3 containing the residue class field of $R$ and affording all irreducible 3-modular representations of $M_{10}, \operatorname{PGL}(2,9)$ and $\Aut(A_6)$. Denote by $L_*$ an $RG$-lattice affording the representation $D_*$. Recall that $k$, $I(kC_3)$ and $kC_3$ denote the indecomposable $kC_6$-modules of $k$-dimension 1, 2 and 3 respectively having trivial composition factors and $(k)^-$, $I(kC_3)^-$ and $(kC_3)^-$ the indecomposable $kC_6$-modules of $k$-dimension 1, 2 and 3 respectively having non-trivial composition factors (see Propositions \ref{prop_modules_cyclic_mod_p} and \ref{prop_lattices_cyclic_group_of_prime_order}). 
We will write $T_*$ for a simple $kG$-module having character $\varphi_*.$ 

Regarded as $k\langle \bar{u} \rangle$-modules using Propositions \ref{prop_modules_cyclic_mod_p} and \ref{prop_lattices_cyclic_group} we may write $\bar{L}_* \cong \bar{L}_*^+ \oplus \bar{L}_*^{-}$ and $T_* \cong T_*^+ \oplus T_*^{-}$, where all the composition factors of $T^+_*$ and $\bar{L}_*^+$ are trivial and all the composition factors of $T^{-}_*$ and $\bar{L}_*^{-}$ are non-trivial. As $u^3$ is rationally conjugate to an element in $\gapcom{2a}$, it is also 3-adically conjugate to this element by \cite[Lemma 2.9]{HertweckAlgColloq}. Thus the $k$-dimensions of $T_*^+$ and $T_*^{-}$ can be deduced from the Brauer table above. The $k$-dimensions of $\bar{L}_*^+$ and $\bar{L}_*^{-}$ can be deduced from the eigenvalues given above using Proposition \ref{prop_lattices_cyclic_group}. The dimensions are given in Table \ref{dimensions_of_+_and_-_parts}.

\begin{table}[h]\centering
\begin{tabular}{l|c|c}
$k\langle \bar{u} \rangle$-module $T$ & $k$-dimension of $T^+$ & $k$-dimension of $T^{-}$ \\ \hline
\rule{0pt}{14pt}$\bar{L}_{\gapcom{10}}$ & 6 & 4 \\
$\bar{L}_{\gapcom{20}}$ & 8 & 12 \\
$T_{\gapcom{1*}}$ & 1 & 0 \\
$T_{\gapcom{6*}}$ & 2 & 4 \\
$T_{\gapcom{8}}$ & 4 & 4 \\
\end{tabular}\\[.4cm]
Where $\gapcom{*}$ takes all possible values in $\{\gapcom{a},\gapcom{b}\}.$\\
\caption{Dimensions of $T^+$ and $T^{-}$ for certain $k\langle \bar{u} \rangle$-modules}\label{dimensions_of_+_and_-_parts}
\end{table}

The Krull-Schmidt-Azumaya Theorem will be used without further mention. We will use decomposition series of $\bar{L}_*$ as $kG$-module, which we obtained using the \gapcom{GAP} package MeatAxe \cite{GAP4}\footnote{The representations of irreducible modules are available in \gapcom{GAP} by the command \gapcom{IrreducibleRepresentationsDixon} or  \gapcom{IrreducibleAffordingRepresentation} once the character is given.}, as shown in Table \ref{comp_fact_ZAut_A_6}.  

\begin{table}[h]\centering
\begin{tabular}{l|c|c}
$kG$-module $T$ & Socle of $T$ & Head of $T$ \\ \hline
\rule{0pt}{14pt}$\bar{L}_{\gapcom{10}}$ & $T_{\gapcom{1i}}$ & $T_{\gapcom{1j}}$ \\
$\bar{L}_{\gapcom{20}}$ & $T_{\gapcom{6a}} \oplus T_{\gapcom{6b}}$ & $T_{\gapcom{8}}$ \\
\end{tabular}\\[.4cm]
Where $(\gapcom{i},\gapcom{j})$ takes a value in $\{(\gapcom{a},\gapcom{b}),(\gapcom{b},\gapcom{a})\}$.\\
\caption{Decomposition factors of certain reduced $R \Aut(A_6)$-lattices}\label{comp_fact_ZAut_A_6}
\end{table}

In this paragraph all modules will be $k\langle \bar{u} \rangle$-modules. With the eigenvalues of $D_{\gapcom{20}}(u)$ as above using Propositions \ref{prop_lattices_cyclic_group} and \ref{prop_lattices_cyclic_group_of_prime_order} we get $\bar{L}_{\gapcom{20}}^{-} \cong 6I(kC_3)^-$. 
As $T_{\gapcom{1a}}$ and $T_{\gapcom{1b}}$ are trivial $k\langle \bar{u} \rangle$-modules by the Brauer table given above, using the eigenvalues of $D_{\gapcom{10}}(u)$ and Proposition \ref{prop_lattices_cyclic_group_of_prime_order}, we obtain $T^{-}_{\gapcom{8}} \cong \bar{L}_{\gapcom{10}}^{-} \cong X$ with $X \in \{(k)^- \oplus (kC_3)^-, 2(k)^- \oplus I(kC_3)^-\}$. But as $T^{-}_{\gapcom{8}} \cong \bar{L}_{\gapcom{20}}^{-}/(T_{\gapcom{6a}}^{-} \oplus T_{\gapcom{6b}}^{-})$, i.e.\ $T_{\gapcom{8}}^{-}$ is also a quotient of $\bar{L}_{\gapcom{20}}^{-} \cong 6I(kC_3)^-$, we get \[T^{-}_{\gapcom{8}} \cong 2(k)^- \oplus I(kC_3)^-.\]
So $6I(kC_3)^-/(T^{-}_{\gapcom{6a}} \oplus T^{-}_{\gapcom{6b}}) \cong \bar{L}_{\gapcom{20}}^{-}/(T^{-}_{\gapcom{6a}} \oplus T^{-}_{\gapcom{6b}}) \cong T^{-}_{\gapcom{8}} \cong 2(k)^- \oplus I(kC_3)^-$ and this implies 
\[T^{-}_{\gapcom{6a}} \oplus T^{-}_{\gapcom{6b}} \cong 2(k)^- \oplus 3I(kC_3)^-.\]
As $\dim_k(T_{\gapcom{6a}}^{-}) = \dim_k(T_{\gapcom{6b}}^{-}) = 4,$ this gives either 
\[T_{\gapcom{6a}}^{-} \cong 2(k)^- \oplus I(kC_3)^- \qquad \text{and} \qquad T_{\gapcom{6b}}^{-} \cong 2I(kC_3)^- \]
or 
\[T_{\gapcom{6a}}^{-} \cong 2I(kC_3)^- \qquad \text{and} \qquad T_{\gapcom{6b}}^{-} \cong 2(k)^- \oplus I(kC3)^-. \]

We will now apply restriction. First consider $u \in \V(\mathbb{Z}M_{10})$. 
\begin{table}[h]
\centering
\begin{tabular}{r|cc} 
 & \gapcom{1a} & \gapcom{2a} \\ \hline 
$\psi_{\gapcom{1a}}$ & $ 1 $ & $ 1 $ \\ 
 $\psi_{\gapcom{1b}}$ & $ 1 $ & $ 1 $ \\ 
 $\psi_{\gapcom{4a}}$ & $ 4 $ & $ . $ \\ 
 $\psi_{\gapcom{4b}}$ & $ 4 $ & $ . $  \\ 
 $\psi_{\gapcom{6}}$ & $ 6 $ & $ -2 $  \\
\end{tabular}
 \caption{Part of the Brauer table of $M_{10}$ for the prime $3,$ including all characters up to degree 6.}\label{brautab_M10}
\end{table}
Looking at the Brauer table of $\Aut(A_6)$ and the Brauer table of $M_{10}$ stated in Table \ref {brautab_M10} we obtain that $T_{\gapcom{6a}}$ and $T_{\gapcom{6b}}$ are isomorphic as $kM_{10}$-modules. So if $u$ lies in $\mathbb{Z}M_{10}$ they must also be isomorphic as $k\langle \bar{u} \rangle$-modules, contradicting the above.
 
Now assume $u$ lies in $\mathbb{Z}\PGL(2,9).$ Let $T$ be $T_{\gapcom{6a}}$ or $T_{\gapcom{6b}}$ such that as $k\langle \bar{u} \rangle$-module $T^{-} \cong 2(k)^- \oplus I(kC_3)^-.$ Looking at the Brauer table of $\PGL(2,9)$ given in Table \ref{brautab_PGL_2_9} we obtain that, considered as $k\PGL(2,9)$-module, $T$ has two $3$-dimensional composition factors, say $S_{\gapcom{3x}}$ and $S_{\gapcom{3y}}.$ Moreover by Clifford's theorem \cite[Theorem 11.1]{CurtisReiner1} $T$ is the direct sum of its two composition factors. The characters belonging to $S_{\gapcom{3x}}$ and $S_{\gapcom{3y}}$ both have the value $-1$ on \gapcom{2a}, so as $k\langle \bar{u} \rangle$-modules $S_{\gapcom{3x}}^{-}$ and $S_{\gapcom{3y}}^{-}$ are both 2-dimensional and thus one of them is isomorphic to $2(k)^-$ while the other one is isomorphic to $I(kC_3)^-$. 

\begin{table}[h] 
\centering
\begin{tabular}{r|cccccc} 
 & \gapcom{1a} & \gapcom{2a} & \gapcom{4a} & \gapcom{5a} & \gapcom{5b} & \gapcom{2b} \\ \hline 
$\tau_{\gapcom{1a}}$ & $ 1 $ & $ 1 $ & $ 1 $ & $ 1 $ & $ 1 $ & $ 1 $ \\ 
 $\tau_{\gapcom{1b}}$ & $ 1 $ & $ 1 $ & $ 1 $ & $ 1 $ & $ 1 $ & $ -1 $  \\ 
 $\tau_{\gapcom{3a}}$ & $ 3 $ & $ -1 $ & $ 1 $ & $ -\alpha $ & $ 1 + \alpha $ & $ -1 $  \\
 $\tau_{\gapcom{3b}}$ & $ 3 $ & $ -1 $ & $ 1 $ & $ 1 + \alpha $ & $ -\alpha $ & $ -1 $  \\
 $\tau_{\gapcom{3c}}$ & $ 3 $ & $ -1 $ & $ 1 $ & $ -\alpha $ & $ 1 + \alpha $ & $ 1 $  \\ 
 $\tau_{\gapcom{3d}}$ & $ 3 $ & $ -1 $ & $ 1 $ & $ 1 + \alpha $ & $ - \alpha $ & $ 1 $ \\ 
 $\tau_{\gapcom{4a}}$ & $ 4 $ & $ . $ & $ -2 $ & $ -1 $ & $ -1 $ & $ . $ \\ 
 $\tau_{\gapcom{4b}}$ & $ 4 $ & $ . $ & $ -2 $ & $ -1 $ & $ -1 $ & $ . $  \\ 
 \end{tabular} 
\\ \begin{tabular}{rl} with & $\alpha = \zeta_5 + \zeta_5^4$     \end{tabular}
\caption{Part of the Brauer table of $\PGL(2,9)$ for the prime $3$, including all characters up to degree 6.}\label{brautab_PGL_2_9}
\end{table}

Let $D_{\gapcom{3x}}: \PGL(2,9) \rightarrow \operatorname{GL}(3,k)$ be a representation of $\PGL(2,9)$ affording $S_{\gapcom{3x}}.$ Let $\alpha$ be the Frobenius automorphism of $k$ applied to every entry of a $3\times 3$-matrix over $k$. Then $\alpha \circ D_{\gapcom{3x}}$ is also a $k$-representation of $\PGL(2,9)$ and, looking at the Brauer table, we obtain that this representation affords $S_{\gapcom{3y}}.$ As $\alpha$ is linear on the full matrix ring, $S_{\gapcom{3x}}$ is also sent to $S_{\gapcom{3y}}$ as a $k\langle \bar{u} \rangle$-module via $\alpha$ and hence $S_{\gapcom{3x}}^{-}$ is sent to $S_{\gapcom{3y}}^{-}$ via $\alpha$. But, since $\alpha$ preserves the dimensions of eigenspaces of a matrix, $S_{\gapcom{3x}}^{-}$ and $S_{\gapcom{3y}}^{-}$ must in fact be isomorphic as $k\langle \bar{u} \rangle$-modules. This contradicts the above and thus the existence of $u$.\hfill $\Box$ 

\bigskip

\noindent \textbf{Acknowledgement:} We would like to thank Dr.\ Martin Hertweck for helpful conversations.

\bibliographystyle{amsalpha}
\bibliography{3primary_v5.bib}

\bigskip

Andreas Bächle, Vakgroep Wiskunde, Vrije Universiteit Brussel, Pleinlaan 2, B-1050 Brussels, Belgium.
\emph{ABachle@vub.ac.be}\\

Leo Margolis, Fachbereich Mathematik, Universit\"{a}t Stuttgart, Pfaffenwaldring 57, 70569 Stuttgart, Germany.
\emph{leo.margolis@mathematik.uni-stuttgart.de}

\end{document}